	\definecolor{darkred}{rgb}{0.5,0,0}
	\definecolor{darkgreen}{rgb}{0,0.5,0}
	\definecolor{darkblue}{rgb}{0,0,0.5}
\DeclareSymbolFont{cyrillic}{T2A}{cmr}{m}{n}
\DeclareMathSymbol{\Sha}{\mathalpha}{cyrillic}{216}
\newcommand{\leqmode}{\tagsleft@true}
\newcommand{\reqmode}{\tagsleft@false}
\theoremstyle{plain}
\newtheorem{theorem}{Theorem}[section]
\newtheorem*{theorem*}{Theorem}
\newtheorem{proposition}[theorem]{Proposition}
\newtheorem{lemma}[theorem]{Lemma}
\newtheorem{corollary}[theorem]{Corollary}
\theoremstyle{remark}
\newtheorem{remark}[theorem]{Remark}
\newtheorem*{acknowledgements}{Acknowledgements}
\theoremstyle{definition}
\newtheorem{definition}[theorem]{Definition}
\numberwithin{equation}{section}
\newcommand{\ZZ}{\mathbb{Z}}
\newcommand{\QQ}{\mathbb{Q}}
\newcommand{\FF}{\mathbb{F}}
\renewcommand{\AA}{\mathbb{A}}
\newcommand{\PP}{\mathbb{P}}
\newcommand{\GG}{\mathbb{G}}
\newcommand{\bfa}{\mathbf{a}} \newcommand{\bfA}{\mathbf{A}}
\newcommand{\sF}{\mathcal{F}}
\newcommand{\sO}{\mathcal{O}}
\DeclareMathOperator{\HH}{H}
\DeclareMathOperator{\Gal}{Gal}
\DeclareMathOperator{\Pic}{Pic}
\DeclareMathOperator{\Br}{Br}
\DeclareMathOperator{\id}{id}
\renewcommand{\epsilon}{\varepsilon}
\begin{document}
\onehalfspacing
\title[]{Genus one fibrations and vertical Brauer elements on del Pezzo surfaces of degree 4}

\author{Vladimir Mitankin}
	\address{Leibniz Universit\"at Hannover, Institut f\"ur Algebra,
  Zahlentheorie und Diskrete Mathematik, Welfengarten 1, 30167
  Hannover, Germany}
	\email{mitankin@math.uni-hannover.de}

\author{Cec\'ilia Salgado}
	\address{Mathematics Department \\
		Bernoulli Institute\\
	Rijksuniversiteit Groningen\\
		The Netherlands
		}
	\email{c.salgado@rug.nl}
\date{\today}
\thanks{2020 {\em Mathematics Subject Classification} 
	 14G05 (primary), 11G35, 11D09, 14D10 (secondary).
}

\begin{abstract}
We consider a family of smooth del Pezzo surfaces of degree four and study the geometry and arithmetic of a genus one fibration with two reducible fibres for which a Brauer element is vertical. 
\end{abstract}  
\maketitle
\setcounter{tocdepth}{1}
\tableofcontents

\section{Introduction}
A del Pezzo surface of degree four $X$ over a number field $k$ is a smooth projective surface in $\mathbb{P}^4$ given by the complete intersection of two quadrics defined over $k$. They are the simplest class of del Pezzo surfaces that have a positive dimensional moduli space and for which interesting arithmetic phenomena happen.
Indeed, del Pezzo surfaces of degree at least 5 with a $k$-point are birational to $\mathbb{P}^2_{k}$ and, in particular, have a trivial Brauer group. They satisfy the Hasse Principle and weak approximation. The Brauer group $\Br X = \HH_{\text{\'{e}t}}^2(X,\mathbb{G}_m)$ of $X$ is a birational invariant which encodes important arithmetic information such as failures of the Hasse principle and weak approximation via the Brauer--Manin obstruction. We refer the reader to \cite[\S8.2]{P17} for an in-depth description of this obstruction. The image $\Br_0 X$ of the natural map $\Br k \rightarrow \Br X$ does not play a r\^{o}le in detecting a Brauer--Manin obstruction and thus one can consider the quotient $\Br X / \Br_0 X$ instead of $\Br X$. We say that $X$ has a trivial Brauer group when this quotient vanishes. 

In contrast to del Pezzo surfaces of higher degree, the Hasse principle may fail for del Pezzo surfaces of degree four \cite{JS17}. Yet, they form a tractable class.  Colliot-Th\'el\`ene and Sansuc conjectured in \cite{CTS80} that all failures of the Hasse principle and weak approximation are explained by the Brauer-Manin obstruction. This is established conditionally for certain families (\cite{Wit07}, \cite{VAV14}). 

In \cite{VAV14} V\'arily-Alvarado and Viray proved that del Pezzo surfaces of degree four that are everywhere locally soluble have a vertical Brauer group. In particular, given a Brauer element $\mathcal{A}$, they show that there is a genus one fibration $g$, with at most two reducible fibres, for which $\mathcal{A}\in g^*(\Br(k(\mathbb{P}^1)))$. The aim of this paper is to study this fibration in detail for a special family of quartic del Pezzo surfaces which we investigated from arithmetic and analytic point of view in \cite{MS20}.

Let $\bfa = (a_0, \dots, a_4)$ be a quintuple with coordinates in the ring of integers $O_k$ of $k$. Define $X_\bfa \subset \PP_k^4$ by the complete intersection
\begin{equation}
	\label{eq:dP4 main}
	\begin{split}
		x_0x_1 - x_2x_3 = 0, \\
		a_0x_0^2 + a_1x_1^2 + a_2x_2^2 + a_3x_3^2 + a_4x_4^2 = 0 
	\end{split}
\end{equation}
and we shall assume from now on that $X_\bfa$ is smooth. The latter is equivalent to $(a_0a_1 - a_2a_3)\prod_{i = 0}^4 a_i \neq 0$. This altogether gives the following family of interest to us in this article:
\[	
	\sF = \{X_\bfa \mbox{ as in } \eqref{eq:dP4 main} \ : \ \bfa \in O_k^5 \mbox{ and } (a_0a_1 - a_2a_3)\prod_{i = 0}^4 a_i \neq 0\}. 
\]

There are numerous reasons behind our choice of this family. Firstly, surfaces in $\sF$ admit two distinct conic bundle structures, making their geometry and hence their arithmetic considerably more tractable. Moreover, for such surfaces the conjecture of Colliot-Th\'{e}l\`{e}ne and Sansuc is known to hold unconditionally \cite{CT90}, \cite{Sal86}. Secondly, our surfaces can be thought of as an analogue of diagonal cubic surfaces as they also satisfy the interesting equivalence of $k$-rationality and trivial Brauer group. This is shown in Lemma~\ref{rational} which is parallel to \cite[Lem.~1.1]{CTKS}. 

Our aim is to take advantage of the two conic bundle structures present in the surfaces to give a thorough description of a genus one fibration with two reducible fibres for which a Brauer element is vertical. More precisely, after studying the  action of the absolute Galois group on the set of lines on the surfaces, we show that the two reducible fibres are of type $I_4$ and that the field of definition of the Mordell--Weil group of the associated elliptic surface depends on the order of the Brauer group modulo constants which in our case is 1, 2 or 4 \cite{Man74}, \cite{SD93}. The presence of the two conic bundle structures plays an important r\^ole forcing a bound on the degree and shape of the Galois group of the field of definition of the lines. We show in Theorem~\ref{thm:MWBrauer} that surfaces with Brauer group of size 2 are such that the genus one fibration only admits a section over a quadratic extension of $k$, while those with larger Brauer group, namely of order 4, have a section for the genus 1 fibration already defined over $k$.
\begin{theorem}
	\label{thm:MWBrauer}
	Let $X_\bfa \in \sF$ and let $\mathcal{E}$ be the genus one fibration on $X_\bfa$ described in \S4.2. Then the following hold.
		\begin{enumerate}[label=\emph{(\roman*)}]
			\item If $\Br X_\bfa / \Br_0 X_\bfa \simeq \mathbb{Z}/2\mathbb{Z}$, then the genus 1 fibration $\mathcal{E}$ is an elliptic fibration i.e., admits a section, over a quadratic extension. Moreover, it admits a section of infinite order over a further quadratic extension. The Mordell--Weil group of $\mathcal{E}$ is fully defined over at most a third quadratic extension.
			\item If $\Br X_\bfa / \Br_0 X_\bfa \simeq (\mathbb{Z}/2\mathbb{Z})^2$, then $\mathcal{E}$ is an elliptic fibration with a 2-torsion section and a section of infinite order over $k$. Moreover, the full Mordell--Weil group of $\mathcal{E}$ is defined over a quadratic extension.
		\end{enumerate}
\end{theorem}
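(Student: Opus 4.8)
The plan is to realise $\mathcal{E}$ explicitly as a rational elliptic surface and then read off its Mordell--Weil group, together with the Galois action, from the configuration of lines studied earlier. Recall from \S4.2 that $\mathcal{E}$ is cut out by the pencil $|F_1+F_2|$, where $F_1,F_2$ are the fibre classes of the two conic bundle structures on $X_\bfa$; a general member has arithmetic genus one by adjunction, since $(F_1+F_2)^2 = 2\,F_1\cdot F_2 = 4$ and $(F_1+F_2)\cdot(-K_{X_\bfa}) = 4$. Blowing up the four base points of this pencil produces a smooth surface $Y$ carrying a relatively minimal genus one fibration $\pi\colon Y\to\PP^1$; as $X_\bfa$ has geometric Picard rank $6$, the surface $Y$ has Picard rank $10$, and becomes a rational elliptic surface as soon as $\pi$ admits a section. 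The starting point is the fact, recalled above, that $\pi$ has exactly two reducible fibres, both of Kodaira type $I_4$.

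First I would compute $\mathrm{MW}(\bar Y)$ by Shioda--Tate. The trivial lattice, spanned by the zero section, a general fibre and the non-identity components of the two $I_4$ fibres, has rank $2+6=8$ with fibral part $A_3\oplus A_3$, so $\operatorname{rank}\mathrm{MW}(\bar Y)=10-8=2$; the torsion subgroup, a subgroup of the product of component groups $(\mathbb{Z}/4\mathbb{Z})^2$, is then pinned down together with the lattice structure by the Oguiso--Shioda classification of the $2I_4$ configuration. The next step is to produce explicit generators: sections of $\pi$ are precisely the $(-1)$-curves on $Y$ meeting every fibre once, and these arise from suitable lines of $X_\bfa$, their strict transforms, and the four exceptional divisors over the base points (each of which is automatically a section). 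Matching the components of the two $I_4$ fibres against the sixteen lines then lets one exhibit the torsion and infinite-order sections as honest curves.

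The heart of the matter is descent, since everything above is fixed geometrically and the assertions concern only \emph{fields of definition}, i.e. the Galois action on these curves. This action is exactly the action on the lines analysed earlier, and the relevant invariant is the order of $\Br X_\bfa/\Br_0 X_\bfa$: for a quartic del Pezzo surface one has $\Br X_\bfa/\Br_0 X_\bfa\cong\HH^1(k,\Pic\bar X_\bfa)$, which is computed directly from the Galois orbits on $\Pic\bar X_\bfa$. I would therefore translate each clause of the theorem into a statement about orbits: a section is defined over $L$ exactly when the corresponding $(-1)$-curve is $\Gal(\bar k/L)$-stable, and similarly for torsion versus infinite order. In case (i), where $\Br X_\bfa/\Br_0 X_\bfa\simeq\mathbb{Z}/2\mathbb{Z}$, the governing orbits stay non-trivial over $k$, so a section appears only after one quadratic extension, an infinite-order section after a second, and both generators of the rank $2$ part after at most a third. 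In case (ii), where $\Br X_\bfa/\Br_0 X_\bfa\simeq(\mathbb{Z}/2\mathbb{Z})^2$, more lines are individually rational, so that a $2$-torsion section and an infinite-order section are already defined over $k$, while a single further quadratic extension splits the remaining generator.

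I expect the main obstacle to be the explicit control of the Galois action on the two generators of the rank $2$ Mordell--Weil lattice, and the identification of the precise quadratic extensions in terms of the $a_i$. Locating the torsion and the sections among the lines is combinatorial, but separating the field over which \emph{some} infinite-order section exists from the field over which the \emph{full} group becomes rational requires careful bookkeeping of the orbit structure, and it is exactly here that the hypothesis on $\Br X_\bfa/\Br_0 X_\bfa$ must be used in its sharp form. As a cross-check, and as an alternative route to the existence statements, I would pass to an explicit Weierstrass model of $\pi$ over $k(\PP^1)$, read the two $I_4$ fibres off the discriminant, and verify the torsion and infinite-order sections by direct substitution.
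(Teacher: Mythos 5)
Your overall route coincides with the paper's: blow up the four base points of the genus one pencil, note the two $I_4$ fibres, apply Shioda--Tate and the height pairing on the resulting rational elliptic surface, and descend via the Galois action on the sixteen lines. (One small slip at the outset: $|F_1+F_2|=|-K_{X_\bfa}|$ is four-dimensional, not a pencil; the fibration comes from the sub-pencil spanned by $F=L_1^{+}+L_2^{+}+M_3^{+}+M_4^{+}$ and its conjugate $F'$, as in Lemma~\ref{lemma: genusonefibration}.) The genuine error is in your descent mechanism for case (ii). Under the order-four hypothesis ($a_0a_1,a_2a_3,-a_0a_2\in k^{*2}$, $-a_0a_4d\notin k^{*2}$) \emph{no} line is defined over $k$: the surface is $k$-minimal, all orbits have size two, and each line meets its Galois conjugate, so ``more lines are individually rational'' is false, and a search for $k$-rational lines to supply the $k$-sections stalls. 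What the hypothesis actually buys is that the four \emph{base points} $P_i$ (the singular points of the fibres $L_i^{+}\cup L_i^{-}$ and $M_i^{+}\cup M_i^{-}$) are $k$-rational, because each pair $\{L_i^{+},L_i^{-}\}$, $\{M_i^{+},M_i^{-}\}$ is Galois-stable; hence the exceptional curves $E_1,\dots,E_4$ are sections over $k$, and it is among these that one finds the zero section, the $2$-torsion section $E_2$ (via $\langle E_2,E_2\rangle=2-0-1-1=0$, together with the fact that the $2I_4$ configuration on a rational elliptic surface admits torsion of order at most $2$), and the infinite-order section $E_3$.

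Beyond this, the two steps you explicitly defer are exactly where the content lies, and one of them would fail as planned. The four exceptional sections do \emph{not} generate the geometric rank-two Mordell--Weil lattice: the height matrix of $(E_3,E_4)$ is degenerate, so $E_4$ is dependent on $E_3$, and reading both infinite-order generators off the blown-up points is impossible. The paper's second generator is instead the strict transform of a line of $X_\bfa$ (which meets each of $F$ and $F'$ in one point, hence is a section over its quadratic field of definition); combined with a Shioda--Tate count over $k$ in case (ii) --- Picard rank $6=2+4$ over $k$, with the fibre components contributing only $3$ because they are conjugate in pairs across the two fibres, so the $k$-rational Mordell--Weil rank is $1$ --- this is what shows that every $k$-rational infinite-order section is dependent on $E_3$ and that exactly one further quadratic extension captures the full group. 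Similarly, in case (i) your ``governing orbits'' argument should be made concrete: hypothesis $(*)$ forces the base points into two conjugate pairs $\{P_1,P_2\}$ and $\{P_3,P_4\}$, so one quadratic extension rationalizes $E_1$ (a section), a further one rationalizes $E_3$ (of infinite order, again by a height computation you have not carried out), and a third suffices for the remaining generators. Your proposed Weierstrass-model cross-check is reasonable but is no substitute for these computations.
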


Not surprisingly, this is in consonance with the bounds obtained in our earlier paper \cite[\S1]{MS20} when $k = \QQ$, as surfaces with Brauer group of size 2 are generic in the family while those with larger Brauer group are special.


This paper is organized as follows. Section~\ref{theconics} contains some generalities on quartic del Pezzo surfaces that admit two conic bundles. There we also describe the two conic bundles on the surfaces of interest to us.  Section~\ref{sec:lines} is devoted to the study of the action of the absolute Galois group on the set of lines on $X_\bfa$. We have also included there a description of the Brauer elements using lines, by means of results of Swinnerton-Dyer, giving the tools to, in Section~\ref{thegenus1}, describe a genus one fibration with exactly two reducible fibres for which a Brauer element is vertical. 

\begin{acknowledgements}
	We would like to thank Martin Bright, Yuri Manin and Bianca Viray for useful discussions. We are grateful to the Max Planck Institute for Mathematics in Bonn and the Federal University of Rio de Janeiro for their hospitality while working on this article. Cec\'ilia Salgado was partially supported by FAPERJ grant E-26/202.786/2019, Cnpq grant PQ2 310070/2017-1 and the Capes-Humboldt program.
\end{acknowledgements}

\section{Two conic bundles}\label{theconics}
Let $X$ be a quartic del Pezzo surface over a number field $k$. From this point on we assume that $X$ is $k$-minimal and moreover that it admits a conic bundle structure over $k$. It follows from \cite{Isk71} that there is a second conic bundle structure on $X$. In this context, given a line $L\subset X$ then $L$ plays simultaneously the r\^ole of a fibre component and of a section depending on the conic bundle considered.

Fix a separable closure $\bar{k}$ of $k$. In what follows we analyse the possible orbits of lines under the action of the absolute Galois group $\Gal(\bar{k}/k)$ when $\Br X \neq \Br_0 X$ in the light of the presence of two conic bundle structures over $k$. Firstly, we recall \cite[Prop.~13]{BBFL07} that tells us the possible sizes of the orbits of lines. In the statement of this proposition the authors consider a quartic del Pezzo surface over $\QQ$ but its proof establishes the result for a del Pezzo surface of degree four over any number field.

\begin{lemma}\label{lem:BBFL}[{\cite[Prop.~13]{BBFL07}}]
	\label{lem:sizes}  
	Let $X$ be a del Pezzo surface of degree four over $k$. Assume that $\Br X / \Br_0 X$ is not trivial. Then the $\Gal(\bar{k}/k)$-orbits of lines in $X$ are one of the following:
	\[
		(2,2,2,2,2,2,2,2), (2,2,2,2,4,4), (4,4,4,4), (4,4,8), (8,8).
	\]
\end{lemma}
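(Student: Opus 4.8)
The plan is to translate the statement into a finite computation in group cohomology together with orbit combinatorics for the Galois image inside the Weyl group $W(D_5)$, and then to read off which orbit partitions survive. I would start from the isomorphism, standard for smooth projective geometrically rational surfaces, $\Br X/\Br_0 X \cong \HH^1(\Gal(\bar k/k), \Pic\bar X)$; since $\Gal(\bar k/k)$ acts on the torsion-free module $\Pic\bar X$ through the finite quotient $G := \mathrm{im}(\Gal(\bar k/k)\to\Aut\Pic\bar X)$, and continuous homomorphisms from a profinite group to $\ZZ^6$ are trivial, inflation identifies the above with $\HH^1(G,\Pic\bar X)$. For a del Pezzo surface of degree four the subgroup of $\Aut\Pic\bar X$ fixing $K_X$ is the Weyl group $W(D_5)$ of order $1920$, and the $16$ lines are identified with the $16$ weights of the half-spin representation, on which $W(D_5)=(\ZZ/2\ZZ)^4\rtimes S_5$ acts transitively by even signed permutations. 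In these terms the Lemma becomes: for every conjugacy class of subgroup $G\le W(D_5)$ with $\HH^1(G,\Pic\bar X)\neq 0$, determine the partition of the $16$ lines into $G$-orbits.

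The next step is a sequence of reductions. Using the exact sequence $0\to K_X^\perp\to\Pic\bar X\to\ZZ\to 0$ with trivial $G$-action on $\ZZ$, I reduce $\HH^1(G,\Pic\bar X)$ to $\HH^1(G,K_X^\perp)$, where $K_X^\perp$ is the $D_5$ root lattice; I then invoke Manin's computation that this group is killed by $2$, so that it is $0$, $\ZZ/2\ZZ$, or $(\ZZ/2\ZZ)^2$. By restriction--corestriction its nonvanishing is detected on a Sylow $2$-subgroup, so $G$ must act through its $2$-part in an essential way; in particular I must rule out any genuine contribution from elements of order $3$ or $5$. Concretely, I would show that if $3\mid|G|$ or $5\mid|G|$ then $(K_X^\perp)^G$ is small enough (equivalently $G$ acts with few enough orbits) that $\HH^1(G,K_X^\perp)=0$. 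This simultaneously forces every orbit size to be a power of $2$ and eliminates orbits whose length is divisible by $3$ or $5$.

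With the odd primes removed, it remains to run over the finitely many conjugacy classes of $2$-subgroups $G\le W(D_5)$, to compute $\HH^1(G,K_X^\perp)$ for each via the signed-permutation model (reducing each computation to linear algebra over $\FF_2$), and to tabulate the orbit partition of the $16$ lines exactly when the cohomology is nonzero. Two endpoints of the admissible list must be pinned down separately: the transitive case, a single orbit of size $16$, must be shown to give $\HH^1=0$; and any $G$ fixing a line, an orbit of size $1$, must likewise give $\HH^1=0$, since a $G$-fixed weight splits off an invariant summand that kills the relevant class. What then survives is precisely $(2,2,2,2,2,2,2,2)$, $(2,2,2,2,4,4)$, $(4,4,4,4)$, $(4,4,8)$, $(8,8)$.

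The main obstacle I anticipate is organizing the subgroup enumeration without omission or repetition: $W(D_5)$ has many conjugacy classes of $2$-subgroups, and for each I must compute both a first cohomology group of the $D_5$ lattice and the orbit decomposition while keeping careful track of $W(D_5)$-conjugacy. I expect the half-spin/signed-permutation description to be the decisive simplification, making the orbit lengths transparent and each cohomology group a small $\FF_2$-computation; the genuinely delicate points are the vanishing statements---transitive action, a fixed line, and odd-order elements all yielding $\HH^1=0$---since these are exactly what carve the admissible partitions down to the five listed.
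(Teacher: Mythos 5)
The paper offers no proof of its own here: the lemma is quoted from \cite[Prop.~13]{BBFL07}, and the proof there is exactly the finite computation you outline in your first and third paragraphs --- a machine enumeration over all conjugacy classes of subgroups $G \le W(D_5)$, recording for each whether $\HH^1(G, \Pic \bar{X})$ vanishes and what the orbit partition of the $16$ lines is. So your framework (Galois image in $W(D_5)$, half-spin model for the lines, cohomology tabulation, and the correct splitting-off argument for a fixed line via $\Pic \bar X = \ZZ e \oplus e^{\perp}$ with $e^2=-1$ and $e^\perp$ a degree-$5$ lattice with vanishing $\HH^1$) matches the cited proof. However, two of your shortcuts introduce genuine gaps. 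First, the Sylow reduction: restriction--corestriction only gives $\HH^1(G) \neq 0 \Rightarrow \HH^1(\mathrm{Syl}_2 G) \neq 0$, and the orbit partition of $G$ is \emph{not} that of its Sylow $2$-subgroup, so enumerating $2$-subgroups tabulates the wrong orbit data unless you first prove that every $G$ with nonzero $\HH^1$ is itself a $2$-group. Your proposed proof of that --- ``$(K_X^{\perp})^G$ is small enough that $\HH^1(G,K_X^\perp)=0$'' --- is not a valid mechanism: there is no implication from small invariants to vanishing $\HH^1$. Nor is the claim combinatorially self-evident: an order-$3$ element of $W(D_5)$ acts on the lines with orbit type $(1,1,1,1,3,3,3,3)$, and these $\langle g \rangle$-orbits can assemble into $G$-orbit types such as $(4,4,8)$, so groups of order divisible by $3$ with $2$-power orbit sizes are not excluded a priori. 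The safe course is what \cite{BBFL07} actually does: enumerate \emph{all} subgroups, with no Sylow shortcut.

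Second, the passage from $\Pic \bar X$ to the root lattice $K_X^{\perp}$ is not an isomorphism on $\HH^1$. The degree sequence $0 \to K_X^{\perp} \to \Pic \bar X \to \ZZ \to 0$ gives, since $\HH^1(G,\ZZ)=0$, a surjection $\HH^1(G, K_X^{\perp}) \twoheadrightarrow \HH^1(G, \Pic \bar X)$ whose kernel is the cokernel of the degree map on invariants, i.e.\ $\ZZ/d\ZZ$ with $d$ the gcd of the degrees of $G$-invariant classes; since $\deg(-K_X)=4$, one has $d \mid 4$, and $d>1$ precisely in the minimal cases you care about (e.g.\ $d=2$ when $\Pic(\bar X)^G$ is generated by $K_X$ and a conic class). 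So $\HH^1(G,K_X^{\perp})$ strictly overshoots $\HH^1(G,\Pic\bar X)$, and invoking the Manin--Swinnerton-Dyer bound (\cite{Man74}, \cite{SD93}), which concerns $\Br X/\Br_0 X \simeq \HH^1(G,\Pic \bar X)$, for the root lattice is unjustified; a tabulation run on $K_X^{\perp}$ would flag subgroups as carrying Brauer classes when they do not. Both gaps are repairable --- compute $\HH^1(G,\Pic\bar X)$ directly and drop the reduction to $2$-subgroups --- at which point your argument becomes, in substance, the enumeration of \cite{BBFL07}.
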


\begin{remark}\label{rmk:conic_orbit}
Recall that we have assumed that $X$ is minimal. In particular, every orbit contains at least two lines that intersect. Since each conic bundle is defined over $k$ and the absolute Galois group acts on the Picard group preserving intersection multiplicities, we can conclude further that each orbit is formed by conic bundle fibre(s). In other words, if a component of a singular fibre of a conic bundle lies in a given orbit, then the other component of the same fibre also lies in it. 
 \end{remark}


\subsection{A special family with two conic bundles}
We now describe the two conic bundle structures over $k$ in the del Pezzo surfaces given by \ref{eq:dP4 main}. It suffices to consider $\FF(1, 1, 0) = \PP(\sO_{\PP^1}(1)\oplus\sO_{\PP^1}(1)\oplus\sO_{\PP^1})$ which one can think of as $((\AA^2 \setminus 0) \times (\AA^3 \setminus 0))/ \GG_m^2$, where $\GG_m^2$ acts on $(\AA^2 \setminus 0) \times (\AA^3 \setminus 0)$ as follows:
\[
	(\lambda, \mu) \cdot (s, t; x, y, z) = (\lambda s, \lambda t; \frac{\mu}{\lambda}x, \frac{\mu}{\lambda} y, \mu z).
\]
The map $\FF(1, 1, 0) \rightarrow \PP^4$ given by $(s, t; x, y, z) \mapsto (sx: ty: tx: sy: z)$ defines an isomorphism between $X_\bfa$ and
\begin{equation}
	\label{eqn:conic bundle}
	(a_0s^2 + a_2t^2)x^2 + (a_3s^2 + a_1t^2)y^2 + a_4z^2 = 0 \subset \FF(1, 1, 0).
\end{equation}
A conic bundle structure $\pi_1 : X_\bfa \rightarrow \PP^1$ on $X_\bfa$ is then given by the projection to $(s, t)$.

Similarly, one obtains $\pi_2 : X_\bfa \rightarrow \PP^1$ via $(s, t; x, y, z) \mapsto (tx: sy: ty: sx: z)$. It gives a second conic bundle structure on $X_\bfa$ as shown by the equation
\begin{equation}
	\label{eqn:conic bundle2}
	(a_0t^2 + a_3s^2)x^2 + (a_1s^2 + a_2t^2)y^2 + a_4z^2 = 0 \subset \FF(1, 1, 0).
\end{equation}

This puts us in position to refine Lemma \ref{lem:BBFL} upon restricting our attention to surfaces in the family $\mathcal{F}$.

\begin{lemma}\label{lem:refine}
Let $X$ be a $k$-minimal del Pezzo surface of degree four described by equation \eqref{eq:dP4 main}. Then the $\Gal(\bar{k}/k)$-orbits of lines in $X$ are one of the following:
	\[
		(2,2,2,2,2,2,2,2), (2,2,2,2,4,4), (4,4,4,4).
	\]
\end{lemma}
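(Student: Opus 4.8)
The plan is to prove the sharper statement that \emph{every} $\Gal(\bar k/k)$-orbit of lines on $X$ has length at most $4$, and then to intersect this bound with the list furnished by Lemma~\ref{lem:BBFL}; the three patterns surviving the bound are exactly those claimed. The extra input, invisible to Lemma~\ref{lem:BBFL}, is that $X$ carries the \emph{two} conic bundles $\pi_1,\pi_2$ of \eqref{eqn:conic bundle}--\eqref{eqn:conic bundle2}, both defined over $k$, and that every one of the sixteen lines is a fibre component of exactly one of them.

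I would first establish this last dichotomy on the level of $\Pic\bar X$. Writing $f_1,f_2$ for the fibre classes of $\pi_1,\pi_2$, these two bundles are the two rulings of the rank-$4$ quadric $x_0x_1-x_2x_3=0$ in the pencil \eqref{eq:dP4 main}, so that $f_1+f_2=-K_X$ (with $f_1^2=f_2^2=0$ and $f_1\cdot f_2=2$). Each $f_i$ is nef, being base-point free, and a line $L$ satisfies $L\cdot(-K_X)=1$; hence the two nonnegative integers $L\cdot f_1$ and $L\cdot f_2$ sum to $1$, so exactly one of them vanishes. Thus $L$ lies in a fibre of precisely one $\pi_i$ and is a section of the other, which is the intersection-theoretic incarnation of the dichotomy noted in \S\ref{theconics}.

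The core step is a length bound for the fibre components of a single bundle, say $\pi_1$. Over $k(\PP^1)$ the fibre of $\pi_1$ is the diagonal conic \eqref{eqn:conic bundle}, whose discriminant is the binary form $(a_0s^2+a_2t^2)(a_3s^2+a_1t^2)a_4$. Since $a_4\neq0$, the four reducible fibres lie over the zeros of the two quadratics $a_0s^2+a_2t^2$ and $a_3s^2+a_1t^2$, each defined over $k$, and the smoothness hypothesis $(a_0a_1-a_2a_3)\prod_i a_i\neq0$ guarantees that these four points are distinct. Consequently $\Gal(\bar k/k)$ preserves the partition of the four reducible fibres into the two $k$-rational pairs cut out by these quadratics, so its orbits on the four fibres have length at most $2$. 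By Remark~\ref{rmk:conic_orbit} an orbit of lines that meets a reducible fibre contains both of its components, so each line-orbit among the eight fibre components of $\pi_1$ covers at most two fibres and hence has length at most $4$. The identical computation applied to \eqref{eqn:conic bundle2} bounds the orbits among the fibre components of $\pi_2$ by $4$ as well.

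Combining the two previous paragraphs, every line is a fibre component of some $\pi_i$ and therefore lies in an orbit of length at most $4$; in particular no orbit of length $8$ occurs, which rules out $(4,4,8)$ and $(8,8)$ from Lemma~\ref{lem:BBFL} and leaves precisely $(2,2,2,2,2,2,2,2)$, $(2,2,2,2,4,4)$ and $(4,4,4,4)$. The step I expect to demand the most care is the discriminant factorisation and its Galois consequence: one must verify that the two quadratic factors are genuinely defined over $k$ and, through the smoothness condition, that they share no zero, so that the four reducible fibres really do split into two $k$-rational pairs rather than forming a single orbit of length $4$ that Galois could permute cyclically, which is exactly what would otherwise allow a line-orbit of length $8$.
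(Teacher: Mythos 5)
Your proof is correct, and its skeleton---bound every Galois orbit of lines by $4$ using the two $k$-rational conic bundles, then intersect with the list of Lemma~\ref{lem:BBFL}---is exactly the paper's. The execution differs, though. The paper disposes of the size-$8$ orbits in a single line by observing, directly from \eqref{eqn:conic bundle} and \eqref{eqn:conic bundle2}, that each line is defined over at most a biquadratic extension of $k$ (each line needs only the square root locating its degenerate fibre and the square root splitting that conic into two components, as the explicit equations in \S\ref{sec:lines} later make visible), whence its orbit has size at most $4$. You never write the lines down; instead you argue structurally: you prove that each line is vertical for exactly one bundle via $f_1+f_2=-K_X$, nefness of the $f_i$, and $L\cdot(-K_X)=1$; you factor the discriminant of each bundle over $k$ into the quadratics $a_0s^2+a_2t^2$ and $a_3s^2+a_1t^2$; you use $a_0a_1-a_2a_3\neq 0$ and $\prod_i a_i\neq 0$ to check that the four degenerate points are distinct and fall into two individually Galois-stable pairs (correctly noting each pair is the zero set of a $k$-rational form, so Galois cannot even swap the pairs); and you invoke Remark~\ref{rmk:conic_orbit} to pass from orbits on fibres to orbits on lines. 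Both mechanisms encode the same underlying fact, the biquadratic splitting behaviour of the lines, but yours buys a coordinate-free argument that additionally substantiates the fibre-component/section dichotomy the paper merely asserts in \S\ref{theconics}, at the cost of length; the paper's one-line proof buys brevity at the cost of leaning on the explicit coordinates of the sixteen lines. Each verifiable claim in your write-up checks out, including the computation $f_1\cdot f_2=2$ and the fact that coincidence of roots of the two quadratic factors is precisely the excluded degeneration $a_0a_1=a_2a_3$.
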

\begin{proof}
We only have to eliminate the possibility of orbits of size 8.  One can see readily from \ref{eqn:conic bundle} and \ref{eqn:conic bundle2} that each line on $X$ is defined over at most a biquadratic extension of $k$.
 \end{proof}

\section{Lines and Brauer elements}
\label{sec:lines}

Following Swinnerton-Dyer \cite{SD99} we detect the double fours that give rise to Brauer classes. Firstly, we show that a del Pezzo surface of degree 4 given by \eqref{eq:dP4 main} has a trivial Brauer group if and only if it is rational over the ground field (see Lemma~\ref{rational}). In particular, no $k$-minimal del Pezzo surface of degree 4 given by \eqref{eq:dP4 main} has a trivial Brauer group. We take a step further after Lemma \ref{lem:refine} and note that for a del Pezzo surface of degree 4 with a conic bundle structure the sizes of the orbits of lines are determined by the order of the Brauer group (but, of course, not vice-versa as a surface with eight pairs of conjugate lines can have both trivial or non-trivial Brauer group for example). On the other hand, if one assumes that the Brauer group is non-trivial then the size of the orbits does determine that of the Brauer group (see Lemma~\ref{sizeorbit}). Moreover, given a non-trivial Brauer element, we describe in detail a genus one fibration with exactly two reducible fibres as in \cite{VAV14} for which this element is vertical. We obtain a rational elliptic surface by blowing up four points, namely two singular points of fibres of the conic bundle \eqref{eqn:conic bundle} together with two singular points of fibres of the conic bundle \eqref{eqn:conic bundle2}. The field of definition of the Mordell--Weil group of the elliptic fibration is determined by the size of the Brauer group of $X_\bfa$. In general, it is fully defined over a biquadratic extension. We also show that the reducible fibres are both of type $I_4$.

\subsection{Conic bundles and lines}

Let $X_\bfa$ be given by \eqref{eq:dP4 main}. Then it admits two conic bundle structures given by \eqref{eqn:conic bundle} and \eqref{eqn:conic bundle2}. Each conic bundle has two pairs of conjugate singular fibres with Galois group $(\ZZ/2\ZZ)^2$ acting on the 4 lines that form each of the two pairs. The intersection behavior of the lines on $X_\bfa$ is described in Figure \ref{intersectionlines}. Together, these 8 pairs of lines give the 16 lines on $X_\bfa$. 



We now assign a notation to work with the lines. Given $i\in \{1, \cdots, 4\}$, the union of two lines $L_i^{+}$ and $L_i^{-}$ will denote the components of a singular fibre of the conic bundle \eqref{eqn:conic bundle}. Similarly, the union of two lines $M_i^{+}$ and $M_i^{}$ will denote the singular fibres of the conic bundle \eqref{eqn:conic bundle2}. More precisely, using the variables $(x_0: x_1: x_2: x_3 : x_4)$ to describe the conic bundles, we have the following

\leqmode
\begin{align*} 
\tag{$L_1^{\pm}$}   x_0x_1=x_2x_3= -\sqrt{-\frac{a_2}{a_0}}, \quad x_4&=\pm \sqrt{\frac{d}{-a_0a_4}}x_1, \\
\tag{$L_2^{\pm}$}  x_0x_1=x_2x_3= \sqrt{-\frac{a_2}{a_0}}, \quad x_4&=\pm\sqrt{\frac{d}{-a_0a_4}}x_1,\\
\tag{$L_3^{\pm}$}  x_0x_1=x_2x_3= -\sqrt{-\frac{a_1}{a_3}}, \quad x_4&=\pm \sqrt{\frac{d}{a_3a_4}}x_2, \\
 \tag{$L_4^{\pm}$}  x_0x_1=x_2x_3=  \sqrt{-\frac{a_1}{a_3}}, \quad x_4&=\pm \sqrt{\frac{d}{a_3a_4}}x_2, \\
 \tag{$M_1^{\pm}$}  x_0x_1=x_2x_3= -\sqrt{-\frac{a_0}{a_3}}, \quad x_4&=\pm \sqrt{\frac{d}{a_3a_4}}x_2,\\
  \tag{$M_2^{\pm}$}  x_0x_1=x_2x_3= \sqrt{-\frac{a_0}{a_3}}, \quad x_4&=\pm \sqrt{\frac{d}{a_3a_4}}x_2,\\
\tag{$M_3^{\pm}$}  x_0x_1=x_2x_3= -\sqrt{-\frac{a_2}{a_1}}, \quad x_4&=\pm \sqrt{\frac{d}{-a_0a_4}}x_1,\\
  \tag{$M_4^{\pm}$}  x_0x_1=x_2x_3= \sqrt{-\frac{a_2}{a_1}}, \quad x_4&=\pm \sqrt{\frac{d}{-a_0a_4}}x_1.
\end{align*}
\reqmode

One can readily determine the intersection behavior of these lines, which we describe in Lemma \ref{lemma:fours}. We also take the opportunity to identify fours and double fours defined over small field extensions. Recall that a \emph{four} in a del Pezzo surface of degree 4 is a set of four skew lines that do not all intersect a fifth one. A \emph{double four} is four together with the four lines that meet three lines from the original four (\cite[Lemma10]{SD93}).

\begin{lemma}\label{lemma:fours}
  Let $i, j,k,l \in \{1,\cdots, 4\}$ with $j\neq i$. Consider $L_i^{+}, L_i^{-}, M_i^{+}$ and $M_i^{-}$  as above. Then 
\begin{enumerate}[label=\emph{(\alph*)}]
 \item $L_i^{+}$ intersects $L_i^{-}, M_i^{-}$ and $ M_j^{+}$, while  $L_i^{-}$ intersects $L_i^{+}, M_i^{+}$, and  $M_j^{-}$.
 \item $M_i^{+}$ intersects $M_i^{-}, L_i^{-}$ and  $L_j^{+}$, while $M_i^{-}$ intersects $M_i^{+}, L_i^{+}$ and  $L_j^{-}$.
 \item The lines $L_i^{+},L_j^{+},M_k^{-},M_l^{-}$ and the lines $L_i^{-},L_j^{-},M_k^{+},M_l^{+}$, with $i+j \equiv k+l\equiv  3 \bmod 4$, form two fours defined over the same field extension $L/ k$ of degree at most 2. Together they form a double four defined over $k$. 
 \end{enumerate}

\end{lemma}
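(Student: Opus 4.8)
The plan is to reduce every assertion to the explicit coordinates already recorded for the sixteen lines. First I would rewrite each line as the projective span of two points of $\PP^4$. Decoding the recorded equations and setting $d = a_0a_1 - a_2a_3$, $\alpha = \sqrt{-a_2/a_0}$ and $\gamma = \sqrt{d/(-a_0a_4)}$, one checks that $L_1^{+}$ is the span of $(-\alpha:0:1:0:0)$ and $(0:1:0:-\alpha:\gamma)$; that both points lie on $X_\bfa$ is exactly the pair of identities $a_0\alpha^2 + a_2 = 0$ and $a_1 + a_3\alpha^2 + a_4\gamma^2 = 0$, and the remaining fifteen lines are written the same way, the superscript $\pm$ flipping the sign of the last coordinate and the index selecting the fibre and the sign of its location. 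Two distinct lines of $X_\bfa$ meet if and only if they share a point, i.e. their four spanning points span a plane, so each incidence claim in (a) and (b) becomes the vanishing of the $4\times 4$ minors of an explicit $4\times 5$ matrix. I would first dispose of the intra-bundle cases using the fibration structure alone: the two components of a reducible fibre of $\pi_1$ meet at its node, while components of distinct fibres are disjoint, which settles all incidences among the $L_i^{\pm}$, and likewise among the $M_i^{\pm}$. Only the cross incidences between an $L$ and an $M$ require the minor computation.

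For (a) I would then compute, for fixed $i$, the incidence of $L_i^{+}$ with each $M_j^{\pm}$ by substituting spanning points and testing the rank condition; the arithmetic of the roots $\alpha$, $\beta = \sqrt{-a_1/a_3}$, $\gamma$, $\delta = \sqrt{d/(a_3a_4)}$ is where the defining identities $a_0\alpha^2 = -a_2$, etc., force the minors to collapse and single out the stated pattern $M_i^{-}, M_j^{+}$. Part (b) is entirely analogous with the roles of $\pi_1$ and $\pi_2$ (equivalently of the $L$'s and $M$'s) interchanged, so I would invoke the symmetric computation rather than repeat it.

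For (c) the skewness and the four/double-four structure become purely combinatorial consequences of (a), (b) and the fibration. The conditions $i+j\equiv k+l\equiv 3\bmod 4$ give $\{i,j\},\{k,l\}\in\{\{1,2\},\{3,4\}\}$; since $L_i^{+}$ meets $M_i^{-}$ by (a), pairwise skewness forces $\{i,j\}\cap\{k,l\}=\emptyset$, so these are the two complementary pairs, and then all six pairs among $L_i^{+},L_j^{+},M_k^{-},M_l^{-}$ are skew. Reading off the incidence table, I would check that exactly $M_k^{+}$ and $M_l^{+}$ meet both $L_i^{+}$ and $L_j^{+}$, hence no single line meets all four (so it is a genuine four), and that the four lines meeting exactly three of them are precisely $L_i^{-},L_j^{-},M_k^{+},M_l^{+}$, exhibiting the double four. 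For the field of definition I would observe that within $\{L_i^{+},L_j^{+},M_k^{-},M_l^{-}\}$ both signs of each fibre-location root occur, so Galois elements acting through those roots merely permute the set; only the single root controlling the last coordinate survives (here $\gamma$, and $\delta$ for the complementary pair), so the unordered four, and equally its partner $\{L_i^{-},L_j^{-},M_k^{+},M_l^{+}\}$, is defined over the quadratic field $k(\gamma)$. Finally the nontrivial element of $\Gal(k(\gamma)/k)$ sends $\gamma\mapsto-\gamma$, swapping the two fours, so their union is fixed and the double four is defined over $k$.

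The main obstacle I anticipate is bookkeeping rather than depth: keeping the sign conventions of the superscripts and the fibre locations consistent across the two sets of coordinates, so that the cross incidences in (a) emerge exactly as stated, and then correctly identifying which single square root survives in the field of definition once the fibre-location roots pair off. The conic-bundle reduction is what keeps this tractable, cutting the genuinely new work down to the $L$-versus-$M$ intersections.
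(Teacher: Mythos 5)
Your proposal is correct and takes essentially the same route as the paper: the incidences in (a)--(b) are verified by direct coordinate computation (the paper simply lists the five intersection points of $L_1^{+}$ explicitly, which is your minor/rank computation in disguise, with (b) by the same $L$--$M$ symmetry), and (c) is deduced from (a)--(b) together with the Galois action on the square roots --- the fibre-location roots permute each four internally while only the root $\gamma=\sqrt{d/(-a_0a_4)}$, i.e.\ $\sqrt{-a_0a_4d}$, exchanges the two constituent fours, exactly as in the paper's invariance argument for $\{L_i^{\pm},L_j^{\pm}\}$ and $\{M_k^{\pm},M_l^{\pm}\}$. Your explicit check that skewness forces $\{i,j\}$ and $\{k,l\}$ to be complementary pairs, and that no fifth line meets all four, makes precise details the paper leaves implicit in ``Part (c) follows from (a) and (b)''.
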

\begin{proof}
 Statements (a) and (b) are obtained by direct calculations. For the line $L_1$, for instance, one sees readily that it intersects $L_1^{-}, M_1^{-}, M_2^{+},M_3^{+}$ and $M_4^{+}$ respectively at the points $(-\sqrt{\frac{-a_2}{a_0}}:0:1:0:0),(-\sqrt{\frac{-a_2}{a_0}}:-\sqrt{\frac{-a_0}{a_3}}:1:-\sqrt{\frac{a_2}{a_3}}:-\sqrt{\frac{d}{{a_4a_3}}}),(-\sqrt{\frac{-a_2}{a_0}}:\sqrt{\frac{-a_0}{a_3}}:1:\sqrt{\frac{a_2}{a_3}}:\sqrt{\frac{d}{{a_4a_3}}}),(-\sqrt{\frac{-a_2}{a_0}}:-\sqrt{\frac{-a_2}{a_1}}:1:\frac{a_2}{\sqrt{a_0a_1}}:-\sqrt{\frac{d a_2}{a_4a_0a_1}})$ and $(-\sqrt{\frac{-a_2}{a_0}}:\sqrt{\frac{-a_2}{a_1}}:1:-\frac{a_2}{\sqrt{a_0a_1}}:\sqrt{\frac{d a_2}{a_4a_0a_1}})$.
 Part (c) follows from (a) and (b). To see that one of such fours is defined over an extension of degree at most 2, note that each subset $\{L_i^{+},L_j^{+}\}$ and $\{M_k^{-},M_l^{-}\}$ is defined over the same extension of degree 2. For instance, taking $i=1,j=2,k=3$ and $l=4$, we see that the four is defined over $k(\sqrt{-a_0a_4d})$. The double four is defined over $k$ since both $\{L_i^{+},L_j^{+},L_i^{-},L_j^{-}\}$ and $\{M_k^{+},M_l^{+},M_k^{-},M_l^{-}\}$ are Galois invariant sets. 
\end{proof}

\begin{figure}[h]
	\label{intersectionlines}
 \[
  \begin{tikzpicture}[inner sep=0,x=25pt,y=15pt,font=\footnotesize]


   \draw[line width=2pt, white] (-4,-5) -- (-3,5);

   \draw[very thick, white]  (-3,-5) -- (-4,5);

   \draw[very thick, white] (-2,-5) -- (-1,5);

      \draw[very thick, white] (-1,-5) -- (-2,5);

       \draw[very thick, white] (1,-5) -- (2,5);

   \draw[very thick, white] (2,-5) -- (1,5);

   \draw[very thick, white] (3,-5) -- (4,5);

      \draw[very thick, white] (4,-5) -- (3,5);

   \draw  (-4,-5) -- (-3,5);
      \node at (-4.25,5.25) {$L_1^{+}$};

   \draw  (-3,-5) -- (-4,5);
         \node at (-3.25,5.25) {$L_1^{-}1$};

   \draw (-2,-5) -- (-1,5);
      \node at (-2.25,5.25) {$L_2^{+}$};

      \draw (-1,-5) -- (-2,5);
      \node at (-1.25,5.25) {$L_2^{-}$};

       \draw (1,-5) -- (2,5);
      \node at (1.25,5.25) {$L_3^{+}$};

   \draw (2,-5) -- (1,5);

        \node at (2.25,5.25) {$L_3^{-}$};

   \draw (3,-5) -- (4,5);
      \node at (3.25,5.25) {$L_4^{+}$};

      \draw (4,-5) -- (3,5);
      \node at (4.25,5.25) {$L_4^{-}$};

    \draw[line width=2pt, white] (-5,-4) -- (5,-3);

   \draw[line width=2pt, white] (-5,-3) -- (5,-4);

   \draw[line width=2pt, white] (-5,-2) -- (5,-1);

      \draw[line width=2pt, white] (-5,-1) -- (5,-2);

       \draw[line width=2pt, white] (-5,1) -- (5,2);

   \draw[line width=2pt, white] (-5,2) -- (5,1);

   \draw[line width=2pt, white] (-5,3) -- (5,4);

      \draw[line width=2pt, white] (-5,4) -- (5,3);

      \draw (-5,-4) -- (5,-3);
      \node at (5.25,-4.25) {$M_1^{-}$};

   \draw (-5,-3) -- (5,-4);
      \node at (5.25,-3.25) {$M_1^{+}$};

   \draw (-5,-2) -- (5,-1);
      \node at (5.25,-1.75) {$M_2^{-}$};

      \draw (-5,-1) -- (5,-2);
      \node at (5.25,-0.75) {$M_2^{+}$};

       \draw (-5,1) -- (5,2);
      \node at (5.25,1.25) {$M_3^{-}$};

   \draw (-5,2) -- (5,1);
      \node at (5.25,2.25) {$M_3^{+}$};

   \draw (-5,3) -- (5,4);
      \node at (5.25,3.25) {$M_4^{-}$};

      \draw (-5,4) -- (5,3);
      \node at (5.25,4.25) {$M_4^{+}$};




   \filldraw (-3.5,0) circle (2pt);


   \filldraw (1.5,0) circle (2pt);

   \filldraw (-1.5,0) circle (2pt);

      \filldraw (3.5,0) circle (2pt);

      \filldraw (0, -3.5) circle (2pt);

   \filldraw (0,1.5) circle (2pt);

   \filldraw (0,-1.5) circle (2pt);=0,

      \filldraw (0,3.5) circle (2pt);

         \filldraw (-3.9,-3.9) circle (2pt);

                  \filldraw (-3.2,-3.2) circle (2pt);

                  \filldraw (-3.3,-1.82) circle (2pt);

                  \filldraw (-3.6,1.15) circle (2pt);

                  \filldraw (-3.81,3.15) circle (2pt);

                  \filldraw (-1.12,-3.63) circle (2pt);

                  \filldraw (-1.34,-1.34) circle (2pt);

                  \filldraw (-1.62,1.3) circle (2pt);

                  \filldraw (-1.82,3.28) circle (2pt);

                   \filldraw (1.83,-3.3) circle (2pt);

                  \filldraw (1.65,-1.35) circle (2pt);

                  \filldraw (1.37,1.37) circle (2pt);

                  \filldraw (3.2,3.2) circle (2pt);

                  \filldraw (1.16,3.63) circle (2pt);

              \filldraw (3.83,-3.1) circle (2pt);

                 \filldraw (3.63,-1.15) circle (2pt);

                   \filldraw (3.3,1.8) circle (2pt);

                    \filldraw (-1.82,-3.3) circle (2pt);

                   \filldraw (1.15,-3.67) circle (2pt);

       \filldraw (3.15,-3.8) circle (2pt);


       \filldraw (1.35,-1.64) circle (2pt);

                 \filldraw (1.68,1.68) circle (2pt);

          \filldraw (1.84,3.32) circle (2pt);

           \filldraw (3.33,-1.81) circle (2pt);

                  \filldraw (3.62,1.17) circle (2pt);

       \filldraw (3.87,3.87) circle (2pt);

       \filldraw (-3.6,-1.12) circle (2pt);

                  \filldraw (-1.68,-1.7) circle (2pt);

                         \filldraw (-3.3,1.82) circle (2pt);

      \filldraw (-3.12,3.8) circle (2pt);

      \filldraw (-1.35,1.62) circle (2pt);

      \filldraw (-1.13,3.64) circle (2pt);
  \end{tikzpicture}
 \] 
 \caption{The lines on $X_\bfa$ and their intersection behaviour. The intersection points of pairs of lines are marked with $\bullet$.}
\end{figure}
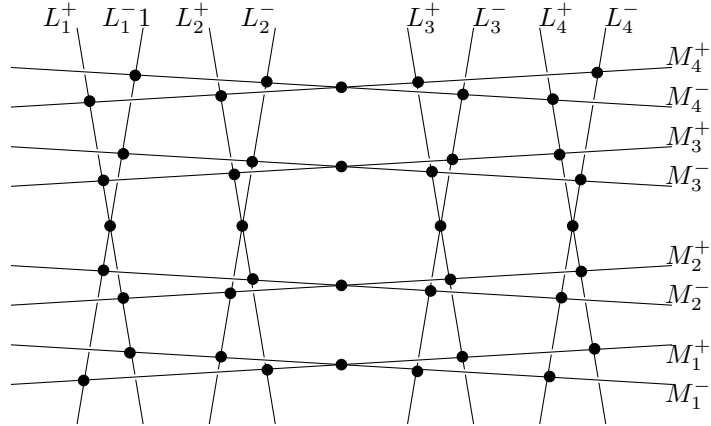

Among the 40 distinct fours on a del Pezzo surface of degree 4, the ones that appear in the previous lemma are special. More precisely, given a four as in Lemma \ref{lemma:fours} such that its field of definition has degree $d\in \{1,2\}$, the smallest degree possible among such fours, then any other four is defined over an extension of degree at least $d$. 
\begin{definition}
 Given a four as in Lemma \ref{lemma:fours} part (c), we call it a \emph{minimal four} if the field of definition of its lines  has the smallest degree among such fours.
\end{definition}

For the sake of simplicity and completion we state a result proved in \cite[Prop. 2.2]{MS20} that determines the Brauer group of $X_\bfa$ in terms of the coefficients $\bfa=(a_0,\cdots, a_4)$. We remark that the statement of the proposition below does not require that the set of adelic points $X_\bfa (\bfA_k)$ of $X_\bfa$ is non-empty and that the proof presented in \cite{MS20} works over an arbitrary number field $k$.

\begin{proposition}
	\label{prop:BrXconic}
	Let $(*)$ denote the condition that  $-a_0a_4d \notin k(\sqrt{-a_0a_2})^{*2}$, $-a_1a_4d \notin k(\sqrt{-a_1a_3})^{*2}$ and that one of $-a_0a_2$, $-a_1a_3$ or $a_0a_1$ is not in $k^{*2}$.	Then we have
	\[
		\Br X_{\bfa} / \Br_0 X_{\bfa} =
		\begin{cases}
			(\ZZ/2\ZZ)^2 &\mbox{if } a_0a_1, a_2a_3, -a_0a_2 \in k^{*2} \mbox{ and } -a_0a_4d \not\in k^{*2}, \\
			\ZZ/2\ZZ &\mbox{if } (*),\\ 
			\{\id\} &\mbox{otherwise.}
		\end{cases}
	\]
\end{proposition}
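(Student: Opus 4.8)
The plan is to identify $\Br X_\bfa / \Br_0 X_\bfa$ with the Galois cohomology group $\HH^1(\Gal(\bar{k}/k), \Pic \bar{X}_\bfa)$. This is legitimate because $X_\bfa$ is geometrically rational, so $\Br \bar{X}_\bfa = 0$ and $\Pic \bar{X}_\bfa$ is a torsion-free Galois lattice; the Hochschild--Serre spectral sequence then yields $\Br X_\bfa / \Br_0 X_\bfa \simeq \HH^1(\Gal(\bar{k}/k), \Pic \bar{X}_\bfa)$. Everything is thus reduced to a computation with the explicit Galois action on the $16$ lines listed above.

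First I would read off the field of definition of each line from the equations for $L_i^{\pm}$ and $M_i^{\pm}$. The position coordinate $x_0x_1 = \pm\sqrt{-a_2/a_0}$ (respectively $\pm\sqrt{-a_1/a_3}$, and so on) produces the quadratic classes $-a_0a_2$ and $-a_1a_3$, and comparing the two conic bundles produces $a_0a_1$; the sign coordinate $x_4 = \pm\sqrt{d/(-a_0a_4)}\,x_1$ (respectively $x_4 = \pm\sqrt{d/(a_3a_4)}\,x_2$) produces the classes $-a_0a_4d$ and $a_3a_4d$. By Lemma~\ref{lem:refine} the compositum $K$ of all these quadratic fields is at most biquadratic, so $G := \Gal(K/k)$ is a subgroup of $(\ZZ/2\ZZ)^2$, and the whole problem becomes the computation of $\HH^1(G, \Pic \bar{X}_\bfa)$ for the explicit lattice $\Pic \bar{X}_\bfa = \langle L_i^{\pm}, M_i^{\pm}\rangle$ with the incidences of Lemma~\ref{lemma:fours}.

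Next I would carry out this cohomology computation through the two conic bundle structures, which present $\HH^1(G, \Pic \bar{X}_\bfa)$ as generated by the quaternion symbols attached to the degenerate fibres of \eqref{eqn:conic bundle}, namely $(-a_0a_2,\,-a_0a_4d)$ and $(-a_1a_3,\,-a_1a_4d)$, subject to a relation controlled by the class $a_0a_1$. Equivalently, following Swinnerton-Dyer, each minimal double four of Lemma~\ref{lemma:fours}(c) supplies a candidate class, and the real content is to decide which candidates are non-constant. The maximal case is the easiest: when the four degenerate fibres are all split over $k$, i.e.\ $-a_0a_2, a_0a_1, a_2a_3 \in k^{*2}$ (which forces $-a_1a_3 \in k^{*2}$ as well), while the two rulings stay non-split, i.e.\ $-a_0a_4d \notin k^{*2}$, the two symbols become independent and $\HH^1(G, \Pic \bar{X}_\bfa) \simeq (\ZZ/2\ZZ)^2$; this is exactly the first case.

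The step I expect to be the main obstacle is the $\ZZ/2\ZZ$ case, where non-triviality of a single class must be decided under the weaker hypothesis $(*)$. The key mechanism is that a symbol such as $(-a_0a_2, -a_0a_4d)$ is a coboundary, hence lands in $\Br_0 X_\bfa$, precisely when, after passing to the splitting field $k(\sqrt{-a_0a_2})$ of the corresponding pair of fibres, the ruling class $-a_0a_4d$ becomes a square there. Thus the relative conditions $-a_0a_4d \notin k(\sqrt{-a_0a_2})^{*2}$ and $-a_1a_4d \notin k(\sqrt{-a_1a_3})^{*2}$ in $(*)$ are exactly what forces the corresponding cocycle not to be a coboundary, while requiring one of $-a_0a_2, -a_1a_3, a_0a_1$ to be a non-square stops the group from jumping up to $(\ZZ/2\ZZ)^2$. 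The delicate bookkeeping is to track these norm-type square conditions through the cocycle computation and to verify that in every remaining configuration all candidate classes die, leaving the trivial group. Since $G \subseteq (\ZZ/2\ZZ)^2$, each individual cohomological step is a short restriction--inflation or restriction--corestriction argument, so no input beyond this square-class analysis is required.
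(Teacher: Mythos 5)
You should first note that this paper does not actually prove Proposition~\ref{prop:BrXconic}: it is imported verbatim from \cite[Prop.~2.2]{MS20}, so your attempt can only be measured against that external computation, whose general frame (identify $\Br X_\bfa/\Br_0 X_\bfa$ with $\HH^1(k,\Pic \bar{X}_\bfa)$, which is legitimate over a number field since $\Br \bar{X}_\bfa=0$ and $\HH^3(k,\bar{k}^*)=0$, and then exploit the two conic bundles) your plan does share. The genuine gap is your reduction to $G\subseteq(\ZZ/2\ZZ)^2$. You infer from Lemma~\ref{lem:refine} that the compositum $K$ of the fields of definition of the $16$ lines is at most biquadratic, but that lemma only says that each individual line (each Galois orbit) is defined over a biquadratic extension. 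The splitting field of the whole Picard lattice is $K=k(\sqrt{-a_0a_2},\sqrt{-a_1a_3},\sqrt{-a_0a_3},\sqrt{-a_0a_4d})$ --- the remaining classes are accounted for by $a_3a_4d\equiv(-a_0a_3)(-a_0a_4d)$ and $-a_1a_2\equiv(-a_0a_2)(-a_1a_3)(-a_0a_3)$ modulo squares --- and for generic $\bfa$ this has degree $16$ over $k$. In particular, precisely in the case $(*)$ that you identify as the main obstacle, $\Gal(K/k)$ is typically $(\ZZ/2\ZZ)^3$ or $(\ZZ/2\ZZ)^4$, so the ``short restriction--inflation arguments over $(\ZZ/2\ZZ)^2$'' you invoke would be carried out over the wrong group. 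The approach is repairable ($\HH^1(k,\Pic\bar{X}_\bfa)$ inflates from the true $\Gal(K/k)$, or one can use the vertical description of the Brauer group of a conic bundle over $\PP^1$), but as written this load-bearing step fails.

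Two further points. First, the ``symbols'' $(-a_0a_2,-a_0a_4d)$ and $(-a_1a_3,-a_1a_4d)$ are constant quaternion algebras, i.e., elements of $\Br k$ whose pullbacks lie in $\Br_0 X_\bfa$ and hence represent nothing in the quotient; the actual generators have the shape $(f,-a_0a_4d)$ with $f$ a non-constant rational function cutting out the relevant fibres or double four, as in Theorem~\ref{doublefour-Brauer}. Your triviality mechanism --- the class attached to the pair of fibres over $s/t=\pm\sqrt{-a_2/a_0}$ dies exactly when $-a_0a_4d$ becomes a square in $k(\sqrt{-a_0a_2})$, and symmetrically for the other pair --- is indeed the correct one and matches $(*)$, but with the representatives as written the claim does not yet parse. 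Second, the decisive content of the proposition is deferred rather than proved: you do not verify that under $(*)$ exactly one class survives, that the first case gives exactly $(\ZZ/2\ZZ)^2$ rather than something larger (an upper bound such as Lemma~\ref{doublefour-sizeBrauer} is needed), or that in every remaining configuration all candidate classes die --- for instance, when $-a_0a_4d\in k(\sqrt{-a_0a_2})^{*2}$ one obtains a pair of skew conjugate lines, contracts to a del Pezzo surface of degree $6$ and concludes triviality, as in the paper's discussion of the trivial Brauer group. So what you have is a plausible plan with one false reduction and the essential case-by-case verification missing.
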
 

Recall the definition of a rank of a fibration \cite{Sko96}, which (as in \cite{FLS18}) for the sake of clarity to be distinguished from the Mordell--Weil rank or the Picard rank we call complexity here. That is the sum of the degrees of the fields of definition of the non-split fibres. It is clear that the conic bundles in $X_\bfa$ have complexity at most four. This allows us to obtain in our setting the following lemma.

\begin{lemma}\label{rational}
 Let $k$ be a number field and $X_\bfa$ given by \eqref{eq:dP4 main}.  Assume that $X_\bfa(\bfA_{k}) \neq \emptyset$. Then $X_\bfa$ is $k$-rational if and only if $\Br X_\bfa = \Br k$.
 \end{lemma}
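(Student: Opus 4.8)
The plan is to prove the equivalence in both directions, leveraging the fact that $k$-rationality of a surface is equivalent to being $k$-birational to $\PP^2_k$, combined with the birational invariance of the Brauer group modulo constants. For the easy direction, if $X_\bfa$ is $k$-rational, then since $\Br \PP^2_k = \Br k$ and $\Br X_\bfa / \Br_0 X_\bfa$ is a $k$-birational invariant, we immediately get $\Br X_\bfa = \Br_0 X_\bfa = \Br k$ (using that $X_\bfa(\bfA_k) \neq \emptyset$ forces $X_\bfa(k_v) \neq \emptyset$ for all $v$, so the map $\Br k \to \Br X_\bfa$ is injective and $\Br_0 X_\bfa \simeq \Br k$).

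For the harder converse, I would argue using the \emph{complexity} of the two conic bundle structures introduced just before the statement. The key observation is that each conic bundle $\pi_i : X_\bfa \to \PP^1$ has complexity at most four, i.e.\ the sum of the degrees of the fields of definition of its non-split (singular) fibres is bounded by four. The standard rationality criterion for conic bundle surfaces, due to Iskovskikh and as used in \cite{Sko96}, states that a conic bundle over a $k$-rational base with a rational point is $k$-rational precisely when its complexity is small enough -- concretely, a degree four del Pezzo surface with a conic bundle of complexity at most four and a $k$-point is $k$-rational unless it is $k$-minimal. The plan is therefore to show: $\Br X_\bfa = \Br k$ forces $X_\bfa$ to be non-minimal, whence the conic bundle with low complexity yields $k$-rationality.

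The crucial step connecting the Brauer condition to minimality runs through the orbit structure of the lines. If $X_\bfa$ were $k$-minimal, then by Lemma~\ref{lem:refine} its $\Gal(\bar k/k)$-orbits of lines would fall into one of the patterns $(2,2,2,2,2,2,2,2)$, $(2,2,2,2,4,4)$, or $(4,4,4,4)$, and by the discussion following Lemma~\ref{lem:refine} (minimality being incompatible with a trivial Brauer group in this family) one would have $\Br X_\bfa \neq \Br_0 X_\bfa$, contradicting $\Br X_\bfa = \Br k$. Hence $\Br X_\bfa = \Br k$ implies $X_\bfa$ is not $k$-minimal. A non-minimal del Pezzo surface of degree four admits a Galois-stable set of skew lines that can be contracted over $k$, and since $X_\bfa$ already carries a conic bundle structure of complexity at most four, this contraction together with the adelic (hence everywhere local) solubility gives a $k$-point and produces a $k$-birational map to a del Pezzo surface of higher degree with a $k$-point, which is $k$-rational.

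The main obstacle I anticipate is making the converse fully rigorous at the level of the geometry of contractions: one must verify that the relevant orbit data from Lemma~\ref{lem:refine} indeed certifies non-minimality, and that in each admissible non-minimal configuration the existing low-complexity conic bundle, combined with $X_\bfa(\bfA_k)\neq\emptyset$, actually produces a $k$-rational parametrization rather than merely a birational map to some intermediate surface. In particular one needs the local solubility hypothesis to guarantee a genuine $k$-point after contraction, and one needs to invoke the classification of conic bundle surfaces carefully to rule out any $k$-minimal obstruction surviving the contraction. I would handle this by appealing directly to the rationality results for conic bundles of degree four in \cite{CT90} and \cite{Sal86} cited in the introduction, which precisely characterize $k$-rationality in terms of (non)triviality of the Brauer group for surfaces in this family, thereby closing the argument.
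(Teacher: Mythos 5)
The forward direction of your argument is fine and matches the paper. The converse, however, contains a genuine circularity at its central step. You deduce ``$\Br X_\bfa = \Br k$ implies $X_\bfa$ is not $k$-minimal'' from ``the discussion following Lemma~\ref{lem:refine} (minimality being incompatible with a trivial Brauer group in this family)''. But that assertion --- stated in the paper at the opening of Section~\ref{sec:lines} as ``no $k$-minimal del Pezzo surface of degree 4 given by \eqref{eq:dP4 main} has a trivial Brauer group'' --- is explicitly presented there as a \emph{consequence} of Lemma~\ref{rational}, the very statement you are proving; it is not an independent input. Nor can Lemma~\ref{lem:refine} supply it: that lemma refines Lemma~\ref{lem:BBFL}, whose hypothesis is that $\Br X/\Br_0 X$ is \emph{nontrivial}, so it says nothing at all about a $k$-minimal surface with trivial Brauer group. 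The non-circular replacement is Proposition~\ref{prop:BrXconic}, which you never invoke: triviality of $\Br X_\bfa/\Br k$ forces, by the negation of condition $(*)$, that $-a_0a_4d \in k(\sqrt{-a_0a_2})^{*2}$ or $-a_1a_4d \in k(\sqrt{-a_1a_3})^{*2}$, and it is this explicit coefficient condition that certifies non-minimality (it makes $L_1^{+}$ and $L_2^{+}$ a conjugate pair of skew lines, as in the ``Trivial Brauer group'' subsection of Section~\ref{sec:lines}) or, equivalently, drops the complexity of the conic bundle $\pi_1$ from $4$ to at most $2$ (the paper's route).

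A second, smaller but real, error is your closing appeal to \cite{CT90} and \cite{Sal86} as results that ``precisely characterize $k$-rationality in terms of (non)triviality of the Brauer group''. They do no such thing --- if they did, the lemma would be immediate. Those papers prove that the Brauer--Manin obstruction is the only obstruction to the Hasse principle for conic bundles with four degenerate geometric fibres; in the paper's proof their sole role is to convert $X_\bfa(\bfA_k)\neq\emptyset$ into $X_\bfa(k)\neq\emptyset$ once the Brauer quotient is trivial. The rationality conclusion then comes from \cite{KM17} (conic bundles of complexity at most $3$ with a rational point are rational), applicable precisely because of the complexity bound extracted from Proposition~\ref{prop:BrXconic}. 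If you repair the circular step by Proposition~\ref{prop:BrXconic}, your contraction route --- contract the conjugate skew pair to a del Pezzo surface of degree $6$, transfer local points by Lang--Nishimura \cite{Lang}, \cite{Nishimura}, use the Hasse principle for degree $6$ to get a $k$-point, and conclude rationality --- does close the argument; indeed it is exactly the paper's own alternative proof given in the ``Trivial Brauer group'' subsection, rather than the proof the paper attaches to the lemma itself.
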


 \begin{proof}
   The \emph{if} implication holds for any $k$-rational variety since $\Br X_\bfa$ is a birational invariant. To prove the non-trivial direction, we make use of \cite{KM17} which shows that conic bundles of complexity at most 3 with a rational point are rational. Firstly, note that the assumption $X_\bfa(\bfA_{k})\neq \emptyset$ implies that $\Br k$ injects into $\Br X_\bfa$. If $\Br X_\bfa / \Br k$ is trivial, then either $-a_0a_4d \in k(\sqrt{-a_0a_2})^{*2}$ or $-a_1a_4d \in k(\sqrt{-a_1a_3})^{*2}$ by Proposition~\ref{prop:BrXconic}.  Thus the complexity of the conic bundle $\pi_1$ is at most 2. It remains to show that $X_\bfa$ admits a rational point. This follows from the independent work in \cite{CT90} and \cite{Sal86} which show that the Brauer--Manin obstruction is the only obstruction to the Hasse principle for conic bundles with 4 degenerate geometric fibres. There is no such obstruction when $\Br X_\bfa / \Br k$ is trivial. Under the assumption $X(\bfA_{k})\neq \emptyset$ we conclude that $X_\bfa$ admits a rational point and hence is rational.
 \end{proof}

 \begin{remark}
  Lemma \ref{rational} is parallel to \cite[Lem.~1]{CTKS} which deals with diagonal cubic surfaces whose Brauer group is trivial. Moreover, a simple exercise shows that in our case, if the Brauer group is trivial, then the surface is a blow up of a Galois invariant set of four points in the ruled surface $\PP^1 \times \PP^1$, while the diagonal cubic satisfying the hypothesis of \cite[Lem.~1]{CTKS} is a blow up of an invariant set of six points in the projective plane. The Picard group over the ground field of the former is of rank four while that of the latter has rank three.   
 \end{remark}

\subsection{Brauer elements and double fours}

The following two results of Swinnerton-Dyer allow one to describe Brauer elements via the lines in a double four, and to determine the order of the Brauer group.

The first result is contained in \cite[Lem. 1, Ex. 2]{SD99}. 

\begin{theorem}
	\label{doublefour-Brauer}
	Let $X$ be a del Pezzo surface of degree 4 over a number field $k$ and $\alpha$ a non-trivial element of $\Br X$. Then $\alpha$ can be represented by an Azumaya algebra in the following way: there is a double-four defined over $k$ whose constituent fours are not rational but defined over $k(\sqrt{b})$, for some non-square $b \in k$. Further, let $V$ be a divisor defined over $k(\sqrt{b})$ whose class is the sum of the classes of one line in the double-four and the classes of the three lines in the double-four that meet it, and let $V'$ be the Galois conjugate of $V$ . Let $h$ be a hyperplane section of S. Then the $k$-rational divisor $D = V + V'-2h$ is principal, and if $f$ is a function whose divisor is $D$ then $\alpha$ is represented by the quaternion algebra $(f,b)$.
\end{theorem}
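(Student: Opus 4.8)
The plan is to translate the statement into Galois cohomology, produce the double four from the known Galois action on the lines, prove one divisor-class identity by intersection theory, and then match the quaternion algebra with $\alpha$ through connecting homomorphisms. Since $X$ is geometrically rational, the Hochschild--Serre spectral sequence gives a canonical isomorphism $\Br X/\Br_0 X\cong\HH^1(k,\Pic\bar X)$ with $\bar X=X\times_k\bar k$, and for a quartic del Pezzo surface this group is killed by $2$. Thus a non-trivial $\alpha$ has order $2$ and is split by a quadratic extension $L=k(\sqrt b)$ with $\Gal(L/k)=\langle\tau\rangle$, and by inflation--restriction its class already lives in $\HH^1(\Gal(L/k),\Pic X_L)$, where $X_L=X\times_k L$. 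Appealing to the classification of the $\Gal(\bar k/k)$-action on the sixteen lines (the admissible orbit sizes of Lemma~\ref{lem:sizes}), I would exhibit a $\Gal(\bar k/k)$-stable double four whose two fours are defined over $L$ and interchanged by $\tau$, and which detects $\alpha$. Writing the double four as $\{a_1,\dots,a_4\}\cup\{b_1,\dots,b_4\}$ with the standard incidence $a_i\cdot b_j=1-\delta_{ij}$, I set $V=a_1+b_2+b_3+b_4$ (one line together with the three lines of the double four meeting it) and $V'={}^{\tau}V$.

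The geometric heart is the identity $V+V'\sim 2h$ with $h=-K_X$, which I would prove uniformly by intersection theory. From $h\cdot L=1$ and $L\cdot L=-1$ for every line, together with the universal double-four incidences, one obtains $V\cdot h=4$, $V^2=2$ and $V\cdot V'=6$, so that $(V+V'-2h)\cdot h=0$ and $(V+V'-2h)^2=16-32+16=0$. As $h^2=4>0$, the orthogonal complement $h^{\perp}$ in $\Pic\bar X\otimes\QQ$ is negative definite, so a class of square zero lying in it must vanish; since $\Pic\bar X$ is torsion free this forces $V+V'=2h$ in $\Pic\bar X$. Fixing an effective $k$-rational anticanonical divisor $H_0\in|{-}K_X|$, the divisor $D=V+V'-2H_0$ is then principal and $\Gal(\bar k/k)$-invariant, so there is $f\in k(X)^{\ast}$ with $\operatorname{div}(f)=D$.

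I would then check that $(f,b)$ is unramified on $X$, hence Azumaya. Since $b\in k^{\ast}$ is constant, the tame symbol of $(f,b)$ at a prime divisor $C$ equals $\bar b^{\,v_C(f)}$ in $\kappa(C)^{\ast}/\kappa(C)^{\ast2}$. Every line component of $D$ lies in one of the two fours, each defined over $L$, so its residue field contains $\sqrt b$ and the symbol is trivial there; the remaining contribution $-2H_0$ has even multiplicities. Hence all residues vanish and $(f,b)\in\Br X$.

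Finally, to identify $(f,b)$ with $\alpha$ I would trace the two short exact sequences $0\to\bar k^{\ast}\to\bar k(X)^{\ast}\to\bar k(X)^{\ast}/\bar k^{\ast}\to 0$ and $0\to\bar k(X)^{\ast}/\bar k^{\ast}\to\operatorname{Div}\bar X\to\Pic\bar X\to 0$ of $\Gal(L/k)$-modules, through whose connecting maps the homomorphism $\HH^1(\Gal(L/k),\Pic X_L)\to\Br X/\Br_0 X$ realizing $\alpha$ factors. The class $\alpha$ is represented by the cocycle $\tau\mapsto[V]-h$ (a cocycle precisely because $V+V'=2h$); lifting it to the divisor $V-H_0$ gives, via the first connecting map, the $2$-cocycle $\operatorname{div}(f)$, and lifting $f$ through the second sequence yields exactly the cyclic quaternion algebra $(f,b)$. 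I expect this last identification to be the main obstacle: pinning down $(f,b)$ as the prescribed class $\alpha$, rather than merely as some unramified algebra, requires mutually compatible choices of cocycle representatives and the compatibility of the boundary map with cup product by $b$, which is the substance of Swinnerton-Dyer's Lemma~1. A secondary but still non-trivial ingredient is the existence of a detecting double four whose fours are defined over a quadratic extension, which rests on the explicit description of the Galois action on the lines.
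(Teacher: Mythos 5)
The first thing to note is that the paper contains no proof of Theorem~\ref{doublefour-Brauer} to compare against: it is imported verbatim from Swinnerton-Dyer \cite[Lem.~1, Ex.~2]{SD99}, so your attempt can only be measured against his argument. The intersection-theoretic core of your proposal is correct and complete: with the incidences $a_i\cdot b_j=1-\delta_{ij}$, $a_i\cdot a_j=b_i\cdot b_j=0$ ($i\neq j$), self-intersection $-1$ and $h\cdot\ell=1$, one indeed gets $V\cdot h=4$, $V^2=2$, $V\cdot V'=6$, so $V+V'-2h$ is orthogonal to $h$ and of square zero, and the Hodge index theorem together with torsion-freeness of $\Pic\bar X$ forces $V+V'\sim 2h$; Hilbert~90 then produces $f\in k(X)^*$. (A pleasant check: the identity holds whichever line of the opposite four $\tau$ sends $a_1$ to, because $a_i+b_j\sim a_j+b_i$ inside a double four.) Your residue computation reaches the right conclusion but for an insufficient reason: that a four is defined over $L=k(\sqrt b)$ \emph{as a set} does not by itself put $\sqrt b$ in the residue field of each line. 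What one actually uses is that the two fours are disjoint and neither is Galois-stable over $k$, so any $\sigma\in\Gal(\bar k/k)$ fixing a line must fix its four, hence lies in $\Gal(\bar k/L)$; thus the constant field of every prime divisor occurring with odd multiplicity in $\operatorname{div}(f)$ contains $\sqrt b$ and the tame symbols vanish. This is fixable, but as written the step is not justified.

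The genuine gaps are the two steps you yourself flag, and they are precisely what the theorem asserts. First, your reduction ``$\alpha$ has order $2$, hence is split by a quadratic extension $L=k(\sqrt b)$, and by inflation--restriction lives in $\HH^1(\Gal(L/k),\Pic X_L)$'' does not follow: a $2$-torsion class in $\HH^1(k,M)$ for a lattice $M$ need not die over any quadratic extension of $k$, and in addition $\Pic X_L\to(\Pic\bar X)^{\Gal(\bar k/L)}$ need not be surjective (the cokernel obstructs in $\Br L$). That every non-trivial class on a quartic del Pezzo surface is detected by a double four defined over $k$ whose constituent fours are conjugate over a quadratic extension is the existence statement of the theorem, and establishing it requires the explicit case analysis of Galois-stable double fours in the spirit of \cite[Lem.~10, Lem.~11]{SD93} and the orbit structure behind Lemma~\ref{lem:sizes}; your ``I would exhibit'' leaves it unproved. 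Second, the identification of the class of $(f,b)$ with the \emph{given} $\alpha$ --- the compatibility of the connecting maps of the two exact sequences with cup product by $b$, carried out with consistent cocycle representatives --- is the other half of Swinnerton-Dyer's Lemma~1, and you explicitly defer it. So what you have is a sound skeleton whose two load-bearing steps are placeholders: everything you prove (the class identity, the descent of $f$, the unramifiedness modulo the stabilizer correction above) is the routine part, while the existence of the detecting double four and the equality $[(f,b)]=\alpha$ remain open in your write-up.
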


The following can be found at \cite[Lem.~11]{SD93}.

\begin{lemma}
	\label{doublefour-sizeBrauer}
	The Brauer group $\Br X$ cannot contain more than three elements of order 2. It
	contains as many as three if and only if the lines in $X$ can be partitioned into four
	disjoint cohyperplanar sets $T_i ,\, \, i=1,.., 4$, with the following properties:
	\begin{enumerate}[label=\emph{(\arabic*)}]
		\item the union of any two of the sets $T_i$ is a double-four;
		\item each of the $T_i$ is fixed under the absolute Galois group;
		\item if $\gamma$ is half the sum of a line $\lambda$ in some $T_i$, the two lines in the same $T_i$ that meet $\lambda$, and one other line that meets $\lambda$, then no such $\gamma$ is in $\Pic X \otimes \mathbb{Q} + \Pic \bar{X}$. 
	\end{enumerate}
\end{lemma}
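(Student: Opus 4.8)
The plan is to pass to the Galois cohomology of the geometric Picard lattice and then extract the combinatorial data from explicit cocycles. Since a del Pezzo surface of degree four has trivial geometric Brauer group, the Hochschild--Serre spectral sequence yields a canonical isomorphism $\Br X / \Br_0 X \cong H^1(G, \Pic \bar{X})$, where $G = \Gal(\bar{k}/k)$ acts through its finite image $\bar{G}$ in $\Aut(\Pic \bar{X}, K_X) = W(D_5)$. The first assertion, that there are at most three elements of order two, would follow by recording that $H^1(\bar{G}, \Pic \bar{X})$ is two-torsion of order at most four for every subgroup $\bar{G} \le W(D_5)$: resolving $\Pic \bar{X}$ by the permutation module $\ZZ[\mathcal{L}]$ on the sixteen lines identifies $H^1(\bar{G}, \Pic \bar{X})$ with a group of $\bar{G}$-invariant half-integral classes modulo $\Pic X \otimes \QQ + \Pic \bar{X}$, and one checks this embeds into $(\ZZ/2\ZZ)^2$. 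It is exactly such half-integral classes that appear in the expression $\gamma$ of condition~(3), which is what makes the cohomological and combinatorial descriptions coincide.

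For the equivalence I would first install the dictionary between order-two classes and double-fours supplied by Theorem~\ref{doublefour-Brauer}: a non-trivial $\alpha$ is represented by $(f,b)$, where $f$ has divisor $V + V' - 2h$ and $V$ is the sum of a line $\lambda$ together with the three lines meeting it inside a $G$-stable double-four whose two fours are conjugate over $k(\sqrt{b})$. Thus every order-two class singles out a $G$-stable double-four, equivalently the complementary pair of double-fours it determines among the sixteen lines. The decisive combinatorial point is that a partition of the lines into four cohyperplanar sets $T_1,\dots,T_4$, each a quadruple with $\sum_{\ell \in T_i}\ell = -K_X$, produces exactly three complementary pairs of double-fours, namely $\{T_i \cup T_j,\, T_k \cup T_l\}$, matching the three non-trivial elements of $(\ZZ/2\ZZ)^2$; conditions~(1) and~(2) are precisely the demands that these unions be double-fours and that the resulting classes be $G$-invariant, hence defined over $k$.

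The two implications then proceed as follows. If $\Br X / \Br_0 X$ contains three elements of order two it equals $(\ZZ/2\ZZ)^2$, with $\alpha_3 = \alpha_1 + \alpha_2$; the independent classes $\alpha_1,\alpha_2$ give partitions of the sixteen lines into complementary double-fours $(A,A')$ and $(B,B')$, and their common refinement yields four quadruples $T_1 = A \cap B,\ T_2 = A \cap B',\ T_3 = A' \cap B,\ T_4 = A' \cap B'$. Their pairwise unions recover all three partitions, the relation $\alpha_3 = \alpha_1 + \alpha_2$ forcing the third to be $\{T_1 \cup T_4,\, T_2 \cup T_3\}$; one then verifies that each $T_i$ is cohyperplanar and $G$-stable, giving (1)--(2), while (3) records that each $\alpha_i$ is non-zero. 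Conversely, given a partition satisfying (1)--(3), each complementary pair of double-fours produces a quaternion class through Theorem~\ref{doublefour-Brauer}, condition~(3) guarantees that the half-sum $\gamma = \frac{1}{2}(\lambda + \mu_1 + \mu_2 + \nu)$ is not of the form (rational invariant class) plus (integral class), so that its image in $H^1(\bar{G}, \Pic \bar{X})$ is non-zero, and the three classes are independent as they arise from the three distinct pairings.

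I expect the main obstacle to be the precise verification that condition~(3) detects non-triviality in cohomology, that is, that the half-integral class $\gamma$ is a coboundary if and only if $\gamma \in \Pic X \otimes \QQ + \Pic \bar{X}$. This is where the intersection combinatorics of Lemma~\ref{lemma:fours} --- which lines meet $\lambda$ inside and outside a prescribed $T_i$ --- must be converted into an explicit coboundary computation in $\Pic \bar{X}$, and where the a priori bound that $H^1(\bar{G}, \Pic \bar{X})$ is two-torsion of order at most four is needed to exclude spurious additional classes.
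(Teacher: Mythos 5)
You should know at the outset that the paper contains no proof of this statement to compare against: the lemma is quoted verbatim from Swinnerton-Dyer \cite[Lem.~11]{SD93}, as the sentence immediately preceding it says. The only meaningful comparison is therefore with Swinnerton-Dyer's original argument, and your sketch does reproduce its strategy in outline: the identification $\Br X/\Br_0 X \simeq \HH^1(k,\Pic \bar{X})$ via Hochschild--Serre (using $\Br \bar{X}=0$), the action through $W(D_5)$, the dictionary between order-two classes and Galois-stable complementary pairs of double-fours, and the common refinement of two such pairs into the four quadruples $T_i$ with $T_i\cup T_j$ recovering all three pairs. That architecture is the right reading of the lemma, and the half-integral classes $\gamma$ of condition (3) are indeed the vehicle by which Swinnerton-Dyer computes $\HH^1$.

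As a proof, however, your proposal has genuine gaps, concentrated exactly at the places you mark with ``one checks'' or ``one then verifies'' --- and these are where the content of the lemma lives. First, the bound that $\HH^1(\bar{G},\Pic\bar{X})$ is $2$-torsion of order at most $4$ for \emph{every} subgroup $\bar{G}\le W(D_5)$ is asserted, not proved; this finite but substantial computation is the substance of the first assertion, so your argument assumes rather than establishes it. Second, in the forward direction you need $|A\cap B|=4$ and that each $T_i$ is cohyperplanar (i.e.\ $\sum_{\ell\in T_i}\ell = -K_X$); neither follows formally from $B\notin\{A,A'\}$ and both must be extracted from the incidence combinatorics of the sixteen lines, which you do not carry out. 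Third, in the converse direction the three quaternion classes must be pairwise distinct and nonzero; distinctness does not follow merely from the three pairings of double-fours being distinct, since Theorem~\ref{doublefour-Brauer} passes from a Brauer class to a double-four and gives no injectivity in the other direction. Finally, the equivalence ``$\gamma$ is a coboundary if and only if $\gamma\in\Pic X\otimes\QQ+\Pic\bar{X}$'', which you yourself single out as the main obstacle, is precisely the heart of condition (3) and is nowhere established. In short: a correct plan that coincides with Swinnerton-Dyer's route rather than deviating from it, but with the four decisive verifications outsourced, it is not yet a proof.
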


We proceed to analyse how the conic bundle structures in $X_\bfa$ and the two results above can be used to describe the Brauer group of $X_\bfa$. 

\subsection{The general case}
We first describe the general case, i.e., on which there are four Galois orbits of lines of size four.

\begin{proposition}\label{prop: generalfours}
Let $X_\bfa \in \sF$ and assume that $\bfa$ satisfies hypothesis $(*)$ of Proposition \ref{prop:BrXconic}. Then there are exactly two distinct double fours on $X_\bfa$ defined over $k$ with constituent fours defined over a quadratic extension. In other words, there are exactly 4 minimal fours which pair up in a unique way to form two double fours defined over $k$. 
\end{proposition}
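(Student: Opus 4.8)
The plan is to reduce the statement to a finite check over pairs of Galois orbits of lines, and then to read fields of definition off the explicit equations for the lines.

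First I would record what hypothesis $(*)$ buys us. By Proposition~\ref{prop:BrXconic} it places us in the general case of this subsection, where the $16$ lines split into four $\Gal(\bar k/k)$-orbits of size $4$, each consisting (by Remark~\ref{rmk:conic_orbit}) of the four components of a conjugate pair of singular fibres: $O_1=\{L_1^\pm,L_2^\pm\}$, $O_2=\{L_3^\pm,L_4^\pm\}$, $O_3=\{M_1^\pm,M_2^\pm\}$ and $O_4=\{M_3^\pm,M_4^\pm\}$. Since each orbit contains intersecting lines (for instance $L_1^+$ meets $L_1^-$), no orbit is a set of skew lines, so no four is Galois-stable; equivalently no four is defined over $k$. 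Hence a four has the smallest possible degree of definition exactly when its $\Gal(\bar k/k)$-orbit has size $2$, i.e. it is defined over a quadratic extension, and by Lemma~\ref{lemma:fours}(c) such fours exist. Counting the double fours of the proposition is therefore the same as counting, and then pairing, the fours defined over a quadratic extension.

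The structural step is to observe that such a four $F$, together with its unique conjugate $F'$, fills out exactly two of the orbits $O_1,\dots,O_4$: since $\{F,F'\}$ is the whole orbit of $F$, the orbit of any line of $F$ is contained in $F\cup F'$, and as line-orbits have size $4$ this forces $F\cap F'=\emptyset$ and $|F\cup F'|=8$; being Galois-stable, $F\cup F'$ is then a union of exactly two of the $O_i$. This reduces the problem to the $\binom{4}{2}=6$ unordered pairs of orbits: for each I would list the fours contained in the corresponding $8$-line union and compute their field of definition. For the computation I parametrise the Galois action through the square roots in the line table, each fibre pair carrying an index root and a superscript ($x_4$-)root; the decisive feature, visible from the equations, is that $L_1,L_2$ and $M_3,M_4$ share the superscript root $\sqrt{-a_0a_4d}$, while $L_3,L_4$ and $M_1,M_2$ share $\sqrt{a_3a_4d}$.

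For the all-$L$ union $O_1\cup O_2$ and the all-$M$ union $O_3\cup O_4$ the two superscript roots act independently and a direct stabiliser computation gives every four an orbit of size at least $4$; in a mixed union the skew relations of Lemma~\ref{lemma:fours}(a),(b) leave exactly two fours, and these are defined over a quadratic extension precisely when the two orbits share a superscript root, that is, only for the pairs $(O_1,O_4)$ and $(O_2,O_3)$. This produces the four fours $\{L_1^+,L_2^+,M_3^-,M_4^-\}$, $\{L_1^-,L_2^-,M_3^+,M_4^+\}$, $\{L_3^+,L_4^+,M_1^-,M_2^-\}$, $\{L_3^-,L_4^-,M_1^+,M_2^+\}$ of Lemma~\ref{lemma:fours}(c), which by that lemma pair up uniquely into the two double fours $O_1\cup O_4$ and $O_2\cup O_3$ defined over $k$. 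The main obstacle I anticipate is making the stabiliser computation uniform: within case $(*)$ the classes $-a_0a_2,-a_1a_3,-a_0a_4d,a_3a_4d$ span an $\FF_2$-subspace of $k^{*}/k^{*2}$ whose dimension can be $2$, $3$ or $4$, so the Galois image varies. One must check that the index of the stabiliser of each four is insensitive to these degenerations, so that the two good pairs always give orbit size exactly $2$ and all remaining fours (including those meeting more than two orbits, already excluded by the structural step) always give orbit size at least $4$; the forced relation $\sqrt{-a_0a_4d}\,\sqrt{a_3a_4d}\sim\sqrt{-a_0a_3}$ among the roots is what keeps the count equal to $4$ in every subcase.
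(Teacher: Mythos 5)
Your opening reduction contains a genuine gap: hypothesis $(*)$ does \emph{not} force the sixteen lines into four Galois orbits of size $4$. By Proposition~\ref{prop:BrXconic}, $(*)$ only pins down $\Br X_\bfa/\Br_0 X_\bfa \simeq \ZZ/2\ZZ$, and by Lemma~\ref{sizeorbit}(ii) this is compatible with orbit type $(2,2,2,2,4,4)$ as well as $(4,4,4,4)$. Concretely, $(*)$ only requires \emph{one} of $-a_0a_2$, $-a_1a_3$, $a_0a_1$ to be a non-square, so e.g.\ $-a_0a_2 \in k^{*2}$ is allowed; then $\sqrt{-a_2/a_0}\in k$ and already $\{L_1^{+},L_1^{-}\}$ is an orbit of size $2$. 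This breaks your structural step — that a quadratic four $F$ and its conjugate $F'$ fill out exactly two of the orbits $O_1,\dots,O_4$, reducing everything to $\binom{4}{2}$ pairs — since in the $(2,2,2,2,4,4)$ case $F\cup F'$ can be a union of three orbits, and fours not of the shape your six-pair check covers can become quadratic. The danger is exactly where you defer the work at the end: the relation $\sqrt{-a_0a_4d}\,\sqrt{a_3a_4d}\sim\sqrt{-a_0a_3}$ cuts against you, not for you. When $-a_0a_3\in k^{*2}$ (which $(*)$ permits), the two superscript roots generate the \emph{same} quadratic extension, and then a four inside the mixed union $\{L_1^{\pm},L_2^{\pm},M_1^{\pm},M_2^{\pm}\}$ such as $\{L_1^{+},L_2^{-},M_1^{+},M_2^{-}\}$ is (by the intersection rules of Lemma~\ref{lemma:fours}(a),(b)) a four whose naive field of definition is that common quadratic extension, pairing with its conjugate into a Galois-stable double four. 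So the count is manifestly \emph{not} insensitive to the degenerations, and your criterion ``quadratic precisely when the two orbits share a superscript root, only for $(O_1,O_4)$ and $(O_2,O_3)$'' needs a real argument (or a restriction of the hypothesis) that the sketch does not supply; asserting that ``one must check'' the stabiliser indices is precisely the missing proof.

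For comparison, the paper's proof does not route through orbit sizes at all. It exhibits the two double fours of Lemma~\ref{lemma:fours}(c) directly, notes that $(*)$ makes their constituent fours quadratic, and excludes any other double four over $k$ by pure intersection combinatorics on the sixteen lines: a candidate four containing $L_1^{+}$ forces $L_1^{-}$ into the opposite four and $L_2^{\pm}$ to be distributed between the two fours, quadraticity forces $L_2^{+}$ alongside $L_1^{+}$, and then the rule that each line of a double four meets three lines of the opposite four eliminates all remaining completions. If you want to salvage your orbit-theoretic route, you must either prove the uniform stabiliser statement across all square-class degenerations allowed by $(*)$ — including the $-a_0a_3\in k^{*2}$ subcase above, which requires careful sign bookkeeping in the line table — or abandon the orbit reduction and argue line-by-line as the paper does.
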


\begin{proof}
Part (c) of Lemma~\ref{lemma:fours} tells us that the minimal fours are given by the double four formed by the fours $\{L_1^{+},L_2^{+},M_3^{-},M_4^{-}\}, \{L_1^{-},L_2^{-},M_3^{+},M_4^{+}\}$ and that formed by $\{L_3^{+},L_4^{+},M_1^{-},M_2^{-}\}$ and  $\{L_3^{-},L_4^{-},M_1^{+},M_2^{+}\}$. By the hypothesis, each four is defined over a quadratic extension and the two double fours are defined over $k$. The hypothesis on the coefficients of the equations defining $X_\bfa$ also imply that any other double four is defined over a non-trivial extension of $k$. For instance, consider a distinct four containing $L_1^{+}$. For a double four containing this four to be defined over $k$, we need that the second four contains $L_1^{-}$ and that one of the fours contains $L_2^{+}$ and the other $L_2^{-}$. The hypothesis that each four is defined over a degree two extension gives moreover that $L_2^{+}$ is in the same four as $L_1^+$ and hence, due to their intersecting one of the lines, $M_1^{+}$ and $M_2^{+}$ cannot be in the same four. We are left with $L_3^{+},L_4^{+},M_3^{+},M_4^{+}$ and their conjugates. But if $L_3^{+}$ is in one of the fours then $L_3^{-}$ would be in the other four. This is impossible as neither $L_3^{+}$ nor $L_3^{-}$ intersect $L_1^{-}$ or $L_2^{-}$, and each line on a double four intersects three lines of the four that do not contain it. 
\end{proof}

\begin{corollary}
Let $X_\bfa$ be as above. Then $\Br X_\bfa/\Br_0 X_{\bfa}$ is of order 2.
\end{corollary}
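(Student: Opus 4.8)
The plan is to read off the order of $\Br X_\bfa/\Br_0 X_\bfa$ from the count of double fours in Proposition~\ref{prop: generalfours} together with Swinnerton-Dyer's description of Brauer classes. Recall first the classical fact (Manin) that for a del Pezzo surface of degree four $\Br X_\bfa/\Br_0 X_\bfa$ is an elementary abelian $2$-group of rank at most $2$, so it is one of $\{0\}$, $\ZZ/2\ZZ$ or $(\ZZ/2\ZZ)^2$, containing respectively $0$, $1$ or $3$ non-trivial elements. It therefore suffices to show that the group is non-trivial and that it is \emph{not} isomorphic to $(\ZZ/2\ZZ)^2$.

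For the upper bound I would invoke Theorem~\ref{doublefour-Brauer}: every non-trivial element of $\Br X_\bfa/\Br_0 X_\bfa$ is represented by a quaternion algebra $(f,b)$ attached to a double four defined over $k$ whose two constituent fours are not rational, but defined over a genuine quadratic extension $k(\sqrt{b})$. Sending such a double four to the class of its quaternion algebra defines a map from the set of double fours over $k$ with irrational constituent fours into $\Br X_\bfa/\Br_0 X_\bfa$ whose image, by Theorem~\ref{doublefour-Brauer}, contains every non-trivial class. Hence the number of non-trivial classes is at most the number of such double fours, which is exactly $2$ by Proposition~\ref{prop: generalfours}. Since $(\ZZ/2\ZZ)^2$ has three non-trivial elements, this excludes the rank-two case, and so $\Br X_\bfa/\Br_0 X_\bfa$ is either trivial or isomorphic to $\ZZ/2\ZZ$.

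For non-triviality I would produce a non-constant class directly from a minimal double four. Take the double four formed by $\{L_1^{+},L_2^{+},M_3^{-},M_4^{-}\}$ and its conjugate, whose constituent fours are defined over $k(\sqrt{-a_0a_4d})$ by Lemma~\ref{lemma:fours}. Forming the $k$-rational principal divisor $D = V + V' - 2h$ as in Theorem~\ref{doublefour-Brauer} and a function $f$ with divisor $D$, the quaternion algebra $(f,-a_0a_4d)$ represents a Brauer class; hypothesis $(*)$ of Proposition~\ref{prop:BrXconic} — precisely the condition $-a_0a_4d \notin k(\sqrt{-a_0a_2})^{*2}$ together with one of $-a_0a_2,-a_1a_3,a_0a_1$ being a non-square — is exactly what makes $b=-a_0a_4d$ a genuine non-square and forces $(f,b)$ to lie outside $\Br_0 X_\bfa$. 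Combined with the upper bound, this yields $\Br X_\bfa/\Br_0 X_\bfa \simeq \ZZ/2\ZZ$.

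The main obstacle I anticipate is this non-triviality step: verifying that $(f,b)$ is genuinely non-constant rather than lying in $\Br_0 X_\bfa$, which requires tracing how each of the three conditions in $(*)$ enters and checking the class is not killed after base change. The cleanest shortcut, if one is willing to quote it, is simply to read non-triviality off from Proposition~\ref{prop:BrXconic}, so that the only genuinely new content is the upper-bound count via Proposition~\ref{prop: generalfours} and Theorem~\ref{doublefour-Brauer}. As an independent consistency check one can note that the rank-two case is also excluded by Lemma~\ref{doublefour-sizeBrauer}: it would require partitioning the sixteen lines into four Galois-stable cohyperplanar sets $T_i$ with all six pairwise unions double fours defined over $k$, which is incompatible with there being only two such double fours having irrational constituent fours.
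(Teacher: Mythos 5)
Your proposal is correct and follows essentially the same route as the paper, whose proof is precisely the one-line deduction from Proposition~\ref{prop: generalfours} together with Theorem~\ref{doublefour-Brauer}: the two double fours bound the number of non-trivial classes by $2$, ruling out $(\ZZ/2\ZZ)^2$, while non-triviality under hypothesis $(*)$ is available from Proposition~\ref{prop:BrXconic}, exactly the shortcut you identify. Your write-up merely makes explicit the counting and the non-triviality step that the paper leaves implicit.
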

\begin{proof}
This is a direct consequence of  Proposition~\ref{prop: generalfours} together with Theorem~\ref{doublefour-Brauer}.
\end{proof}
We shall now allow further assumptions on the coefficients of $X_\bfa$ to study how they influence the field of definition of double fours and hence the Brauer group.

\subsection{Trivial Brauer group}

 Suppose that one of $-a_0a_4d, -a_1a_4d, a_2a_4d, a_3a_4d$ is in $k^{*2}$. Assume, to exemplify, that $-a_0a_4d$ is a square. Consider the conic bundle structure given by \eqref{eqn:conic bundle}. Then the lines $L_1^{+}$ and $L_2^{+}$ are conjugate and, clearly, do not intersect. Indeed, they are components of distinct fibres of \eqref{eqn:conic bundle}. Contracting them we obtain a del Pezzo surface of degree 6. If $X_\bfa$ has points everywhere locally, the same holds for the del Pezzo surface of degree 6 by Lang--Nishimura \cite{Lang}, \cite{Nishimura}. As the latter satisfies the Hasse principle, it has a $k$-point. In particular, $X_\bfa$ is rational, which gives us an alternative proof of Lemma~\ref{rational}.

  \subsection{Brauer group of order four}
  
 For the last case, assume that $a_0a_1, a_2a_3, -a_0a_2 \in k^{*2}, -a_0a_4d, -a_1a_4d, a_2a_4d,  a_3a_4d \not\in k^{*2}$. We produce two double fours that give distinct Brauer classes. Firstly note that all the singular fibres of the two conic bundles are defined over $k$. In particular, their singularities are $k$-rational points and thus there is no Brauer--Manin obstruction to the Hasse principle and $\Br_0 X_\bfa = \Br k$. Moreover, every line is defined over a quadratic extension, but no pair of lines can be contracted since each line intersects its conjugate. Secondly, note that since $-a_0a_2$ is a square, thus $k(\sqrt{-a_0a_4d})=k(\sqrt{a_2a_4d})$. We have the double four as above, given by $L_1^{+},L_2^{+},M_3^{-},M_4^{-}$ and the correspondent intersecting components, and a new double four given by $\{L_1^{+},L_3^{+},M_2^{-},M_4^{-}\} ,\{L_2^{+},L_4^{+},M_1^{-},M_4^{-}\}$, which under this hypothesis is formed by two \emph{minimal} fours. 

 The Picard group of $X_\bfa$ is generated by $L_1^{+},L_2^{+}, L_3^{+},L_4^{+}$, a smooth conic and a section, say $M_1^{+}$ of the conic fibration (\ref{eqn:conic bundle}). We can apply Lemma~\ref{doublefour-sizeBrauer} with $T_i=\{L_i^{+},L_i^{-}, M_i^{+}, M_i^{-}\}$ to check that in this case the Brauer group has indeed size four.
 
\begin{lemma}
	\label{sizeorbit}
	Let $X_\bfa$ be as in \eqref{eq:dP4 main}. Assume that $X_\bfa$ does not contain a pair of skew conjugate lines or, equivalently, $X_\bfa$ is not $k$-rational. Then the following hold:
	\begin{enumerate}[label=\emph{(\roman*)}]
	 	\item $\# \Br X_\bfa /\Br_0 X_\bfa =4$ if and only if the set of lines on $X_\bfa$ has orbits of size \newline $(2,2,2,2,2,2,2,2)$.
 		\item $\# \Br X_\bfa /\Br_0 X_\bfa =2$ if and only if the set of lines on $X_\bfa$ has orbits of size \newline $(2,2,2,2,4,4)$ or $(4,4,4,4)$.
	\end{enumerate}
\end{lemma}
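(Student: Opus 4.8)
The plan is to translate everything into square-class conditions on the coefficients, reading the field of definition of each line off the equations $(L_i^{\pm})$, $(M_i^{\pm})$ and matching it against Proposition~\ref{prop:BrXconic}. Since $X_\bfa$ is assumed not $k$-rational, the ``otherwise'' (trivial) case of Proposition~\ref{prop:BrXconic} is excluded (cf.\ Lemma~\ref{rational}), so $\Br X_\bfa/\Br_0 X_\bfa$ equals $\ZZ/2\ZZ$ or $(\ZZ/2\ZZ)^2$, while by Lemma~\ref{lem:refine} the orbit type is one of $(2,2,2,2,2,2,2,2)$, $(2,2,2,2,4,4)$, $(4,4,4,4)$. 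As the Brauer order takes one of two values and the orbit type one of three, and the asserted correspondence is a partition of these cases, it suffices to prove the two implications ``order $4$ $\Rightarrow$ all orbits of size $2$'' and ``order $2$ $\Rightarrow$ there is an orbit of size $4$''; the converses then follow formally.

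First I would record that $L_1^{+}$ is defined over $k(\sqrt{-a_0a_2},\sqrt{-a_0a_4d})$, the two radicals being read off from $x_0x_1=\sqrt{-a_2/a_0}$ and $x_4=\sqrt{d/(-a_0a_4)}\,x_1$ respectively; likewise $L_3^{+}$ is defined over $k(\sqrt{-a_1a_3},\sqrt{a_3a_4d})$ and $M_1^{+}$ over $k(\sqrt{-a_0a_3},\sqrt{a_3a_4d})$, the remaining lines being symmetric. The Galois orbit of such a line has size $4$ exactly when its two radicals span a biquadratic extension; since in each case the product of the two relevant discriminants is, up to squares, the third member of a triple whose product is a square --- e.g.\ $\{-a_0a_2,-a_0a_4d,a_2a_4d\}$ for $L_1^{+}$ --- this happens precisely when all three elements of that triple are non-squares in $k^{*}$.

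For ``order $4$ $\Rightarrow$ all orbits of size $2$'' I would substitute the defining conditions $a_0a_1,a_2a_3,-a_0a_2\in k^{*2}$ and $-a_0a_4d\notin k^{*2}$. A short manipulation of square classes then yields $-a_1a_3,-a_0a_3,-a_1a_2\in k^{*2}$ and shows that $a_2a_4d$ and $a_3a_4d$ lie in the same non-trivial class as $-a_0a_4d$. Hence each of the sixteen lines is defined over the single quadratic field $k(\sqrt{-a_0a_4d})$, so all eight orbits have size $2$.

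The substantive implication is ``order $2$ $\Rightarrow$ size-$4$ orbit'', and the main obstacle is that hypothesis $(*)$ is phrased through relative squares. The first step is to unwind it: using $K^{*2}\cap k^{*}=k^{*2}\cup(-a_0a_2)k^{*2}$ for $K=k(\sqrt{-a_0a_2})$, the condition $-a_0a_4d\notin K^{*2}$ is equivalent to ``$-a_0a_4d\notin k^{*2}$ and $a_2a_4d\notin k^{*2}$'', and symmetrically for the second relative condition; thus under $(*)$ all four of $-a_0a_4d,\,a_2a_4d,\,-a_1a_4d,\,a_3a_4d$ are non-squares, and one of $-a_0a_2,-a_1a_3,a_0a_1$ is a non-square. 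I would then split into cases along the criterion of the previous paragraph: if $-a_0a_2\notin k^{*2}$ then the orbit of $L_1^{+}$ has size $4$; otherwise, if $-a_1a_3\notin k^{*2}$ then the orbit of $L_3^{+}$ has size $4$; and in the remaining case $-a_0a_2,-a_1a_3\in k^{*2}$, so $a_0a_1\notin k^{*2}$ by $(*)$, and the relation $-a_0a_3\sim(-a_1a_3)(a_0a_1)$ forces $-a_0a_3\notin k^{*2}$, whence the orbit of $M_1^{+}$ has size $4$. In every case a size-$4$ orbit appears, so by Lemma~\ref{lem:refine} the orbit type is $(2,2,2,2,4,4)$ or $(4,4,4,4)$. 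The delicate points are the correct passage from relative to absolute squares and the need, in the last subcase, to abandon the $L$-lines (both of whose orbits remain of size $2$) and locate the size-$4$ orbit among the $M$-lines.
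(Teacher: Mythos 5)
Your proof is correct and takes essentially the same approach as the paper: the paper's own proof of Lemma~\ref{sizeorbit} is a one-sentence sketch citing Swinnerton-Dyer or ``a reinterpretation of Proposition~\ref{prop:BrXconic} together with the description of the lines'', and your square-class case analysis is exactly that reinterpretation carried out in full, including the correct unwinding of the relative-square conditions in $(*)$ via $K^{*2}\cap k^{*}=k^{*2}\cup(-a_0a_2)k^{*2}$ and the key observation that when $-a_0a_2,-a_1a_3\in k^{*2}$ the size-$4$ orbit must be found among the $M$-lines. One minor caveat, shared with the paper's own phrasing of the hypothesis: invoking Lemma~\ref{rational} to exclude the trivial Brauer case tacitly assumes $X_\bfa(\bfA_k)\neq\emptyset$, though this is avoidable since your own computations show that ``no skew conjugate pair'' forces all four of $-a_0a_4d, a_2a_4d, -a_1a_4d, a_3a_4d$ to be non-squares, which together with Proposition~\ref{prop:BrXconic} already rules out the trivial case directly.
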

\begin{proof}
 This is an application of \cite[Lem.~11]{SD93} or a reinterpretation of Proposition~\ref{prop:BrXconic} together with the description of the lines given in this section and the construction of Brauer elements via fours given by Swinnerton-Dyer (see for instance \cite[Lem.~10]{SD93} and \cite[Thm 10.]{BBFL07} for the construction of the Brauer elements via fours).
\end{proof}

 \section{A genus 1 fibration and vertical Brauer elements}\label{thegenus1}
 
In what follows we will give a description of the genus 1 fibration $ X_{\bfa} \dashrightarrow \PP^1$ from \cite{VAV14} for which a given Brauer element is vertical. First we recall some basic facts about elliptic surfaces. We then obtain the Brauer element and the genus 1 fibration as in \cite{VAV14} to afterwards reinterpret it in our special setting of surfaces admitting two non-equivalent conic bundles over the ground field. We study how the order of the Brauer group influences the arithmetic of this genus 1 fibration. More precisely, after blowing up the base points of the genus one pencil, we show that the field of definition of its Mordell--Weil group depends on the size of the Brauer group.

\subsection{Background on elliptic surfaces}
Let $k$ be a number field.
\begin{definition}\label{def: ellsurf}
An \emph{elliptic surface} over $k$ is a smooth projective surface $X$ together with a morphism $\mathcal{E}: X \to B$ to some curve $B$  whose generic fibre is a smooth curve of genus $1$, i.e., a genus 1 fibration. If it admits a section we call the fibration \emph{jacobian}. In that case, we fix a choice of section to act as the identity element for each smooth fibre. The set of sections is in one-to-one correspondence with the $k(B)$-points of the generic fibre, hence it has a group structure and it is called the \emph{Mordell--Weil group} of the fibration, or of the surface if there is no doubt on the fibration considered. 
\end{definition}
\begin{remark}
 If $X$ is a rational surface and an elliptic surface, we call it a \emph{rational elliptic surface}.  If the fibration is assumed to be minimal, i.e., no fibre contains $(-1)$-curves as components, then by the adjunction formula the components of reducible fibres are $(-2)$-curves. In that case, the sections are precisely the $(-1)$-curves and the fibration is jacobian over a field of definition of any of the $(-1)$-curves. 
\end{remark}

Given a smooth, projective, algebraic surface $X$ its Picard group has a lattice structure with bilinear form given by the intersection pairing. If $X$ is an elliptic surface then, thanks to the work of Shioda, we know that its Mordell--Weil group also has a lattice structure, with a different bilinear pairing \cite{ShiodaMWL}. Shioda also described the N\'eron--Tate height  pairing via intersections with the zero section and the fibre components. This allows us to determine, for instance, if a given section is of infinite order and the rank of the subgroup generated by a subset of sections.  We give a brief description of the height pairing below.

\begin{definition}
 Let $\mathcal{E}: X\rightarrow B$ be an elliptic surface with Euler characteristic $\chi$. Let $O$ denote the zero section and $P, Q$ two sections of $\mathcal{E}$. The N\'eron--Tate height pairing is given by 
 $$\langle P,Q \rangle= \chi+ P\cdot O +Q\cdot O- P\cdot Q -\sum_{F \in \text{ reducible fibres }} \text{contr}_F(P,Q),$$
 where $\text{contr}_F(P,Q)$ denotes the contribution of the reducible fibre $F$ to the pairing and depends on the type of fibre (see \cite[\S8]{ShiodaMWL} for a list of all possible contributions).
 Upon specializing at $P=Q$ we obtain a formula for the height of a section (point in the generic fibre):
 $$h(P)= \langle P, P \rangle = 2\chi +2P\cdot O - \sum_{F \in \text{ reducible fibres}} \text{contr}_F(P).$$
\end{definition}

\begin{remark}
 The contribution of a reducible fibre depends on the components that $P$ and $Q$ intersect. In this article we deal only with fibres of type $I_4$, thus for the sake of completion and brevity we give only its contribution. Denote by $\Theta_0$ the component that is met by the zero section, $\Theta_1$ and $\Theta_3$ the two components that intersect $\Theta_0$, and let $\Theta_2$ be the component opposite to $\Theta_0$. If $P$ and $Q$ intersect $\Theta_i$ and $\Theta_j$ respectively, with $i\leq j$ then $\text{ contr}_{I_4}(P,Q)=\frac{i(4-j)}{4}$. 
\end{remark}

\subsection{Vertical elements} 

\begin{definition}
 Let $X$ be a smooth surface. Given a genus 1 fibration $\pi: X\rightarrow \PP^1$, the vertical Picard group, denoted by $\Pic_{vert}$, is the subgroup of the Picard group generated by the irreducible components of the fibres of $\pi$. The vertical Brauer group $\Br_{vert}$ is given by the algebras in $\Br k(\PP^1)$ that give Azumaya algebras when lifted to $X$ (see \cite[Def.~3]{Bri06}). 
\end{definition}
  
There is an isomorphism $\Br X/ \Br_0 X \simeq \mathrm{H}^1(k, \Pic \bar{X})$ and, as described by Bright \cite[Prop.4]{Bri06}, a further isomorphism between $B:=\{\mathcal{A} \in \Br k(\PP^1); \pi^* \mathcal{A} \in \Br X\}$ and $\mathrm{H}^1(k, \Pic_{vert})$. Combining these with Theorem \ref{doublefour-Brauer}, allows us to describe vertical Brauer elements as those for which the lines in Theorem \ref{doublefour-Brauer} are fibre components of $\pi$. 

\begin{definition}
We call a Brauer element horizontal w.r.t. $\pi$ if the lines used in Theorem \ref{doublefour-Brauer} to describe it are sections or multisections of $\pi$.
\end{definition}
\begin{remark} As a line cannot be both a fibre component and a (multi)-section simultaneously, a Brauer element that is horizontal cannot be vertical and vice-versa. For a general fibration $\pi$ some Brauer elements might be neither horizontal nor vertical.  
\end{remark}
The following result shows that for a specific elliptic fibration, all Brauer elements are either horizontal or vertical. 

\begin{lemma}\label{lemma: genusonefibration}
Assume that the Brauer group of $X_\bfa$ is non-trivial. Let $F=L_1^{+}+L_2^{+}+M_3^{+}+M_4^{+}$ and $F'=L_1^{-}+L_2^{-}+M_3^{-}+M_4^{-}$. The pencil of hyperplanes spanned by $F$ and $F'$ gives a genus one fibration with exactly two reducible fibres on $X_\bfa$ which are of type $I_4$, for which a non-trivial element of its Brauer group is vertical. The other Brauer elements are horizontal.
\end{lemma}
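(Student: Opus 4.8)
The plan is to realise $\langle F, F'\rangle$ as a sub-pencil of the anticanonical system, resolve its base locus by blowing up, and then read off the fibre types and the (non-)verticality of the Brauer classes from the intersection data of Lemma~\ref{lemma:fours} together with the double fours of Proposition~\ref{prop: generalfours}.

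First I would record the combinatorics. Using parts (a) and (b) of Lemma~\ref{lemma:fours} one checks that the components of $F$ form the cycle $L_1^{+}\!-\!M_3^{+}\!-\!L_2^{+}\!-\!M_4^{+}\!-\!L_1^{+}$ (consecutive lines meet, opposite lines are skew), and likewise $F'$ is the cycle $L_1^{-}\!-\!M_3^{-}\!-\!L_2^{-}\!-\!M_4^{-}\!-\!L_1^{-}$. From these incidences one computes $F^2=4$ and $F\cdot(-K_{X_\bfa})=4$ (and the same for $F'$), so by the Hodge index theorem on the rational surface $X_\bfa$ one gets $F\sim F'\sim -K_{X_\bfa}$; hence $F$ and $F'$ span a pencil inside $|{-}K_{X_\bfa}|$ whose generic member is a smooth curve of genus one. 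The pencil is defined over $k$: the absolute Galois group either fixes or interchanges $F$ and $F'$, and in both cases it preserves the two-dimensional subspace $\langle f,f'\rangle\subset \HH^0(X_\bfa,-K_{X_\bfa})$ cutting it out. A second use of (a) and (b) shows that the only incidences between a component of $F$ and a component of $F'$ are $L_1^{+}\cap L_1^{-}$, $L_2^{+}\cap L_2^{-}$, $M_3^{+}\cap M_3^{-}$ and $M_4^{+}\cap M_4^{-}$; these four points are the base locus $F\cap F'$ (consistent with $F\cdot F'=4$), and they are exactly two nodes of singular fibres of $\pi_1$ and two of $\pi_2$.

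Next I would blow up these four base points to obtain a morphism $\mathcal{E}\colon \widetilde{X_\bfa}\to\PP^1$, the total space being a rational elliptic surface. Each component of $F$ (respectively $F'$) contains exactly one base point, so its strict transform acquires self-intersection $-2$, while the internal incidences of the cycle (the points $L_1^{+}\cap M_3^{+}$, etc.) are untouched; thus $\widetilde F$ and $\widetilde F'$ are cycles of four $(-2)$-curves, i.e.\ Kodaira fibres of type $I_4$, and the four exceptional curves, meeting each fibre once, are sections. That these are the only reducible fibres follows from \cite{VAV14}, which produces precisely this fibration with at most two reducible fibres; since we have exhibited two, there are no others.

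Finally I would read off the Brauer classes. By Proposition~\ref{prop: generalfours} the double four with constituent fours $\{L_1^{+},L_2^{+},M_3^{-},M_4^{-}\}$ and $\{L_1^{-},L_2^{-},M_3^{+},M_4^{+}\}$ has as its eight lines exactly the components of $\widetilde F$ and $\widetilde F'$. Applying Theorem~\ref{doublefour-Brauer} to it, the divisor $V$ (a line together with the three lines of the double four meeting it, e.g.\ $L_1^{+}+L_1^{-}+M_3^{+}+M_4^{+}$) is supported on fibre components, so $V\in\Pic_{vert}$ and the associated class is vertical for $\mathcal{E}$. Any remaining non-trivial class is represented by a double four disjoint, as a set of lines, from the fibre components, for instance $\{L_3^{\pm},L_4^{\pm},M_1^{\pm},M_2^{\pm}\}$ of Proposition~\ref{prop: generalfours}; since $\widetilde F,\widetilde F'$ are the only reducible fibres, none of these lines is a fibre component, so each is a multisection and the class is horizontal. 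I expect the main obstacle to be the rigorous proof that the two $I_4$ fibres are the \emph{only} reducible members of the pencil: the identity $e(\widetilde{X_\bfa})=12$ together with the Shioda--Tate formula does not by itself exclude an additional $I_2$ fibre, so one must invoke the structural input of \cite{VAV14} or else analyse the degeneration of the anticanonical pencil $\lambda f+\mu f'$ directly.
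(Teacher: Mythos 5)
Your construction is essentially the paper's: the same identification of $\langle F,F'\rangle$ as a sub-pencil of $|-K_{X_\bfa}|$ (you in fact supply more detail than the paper, verifying $F\sim F'\sim -K_{X_\bfa}$ via intersection numbers and checking Galois-invariance of the pencil), the same four base points at the singular points of the conic-bundle fibres, the same blow-up turning $\widetilde F$ and $\widetilde F'$ into cycles of four $(-2)$-curves, hence $I_4$, the same use of Theorem~\ref{doublefour-Brauer} with the double four supported on the components of $F$ and $F'$ to produce a class in the image of $\mathrm{H}^1(k,\Pic_{vert})$ (the paper makes this precise via \cite[Prop.~4]{Bri06}), and the same multisection argument for horizontality of the remaining classes.

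The one step where you diverge is exactly the one you flag, and there your substitute argument does not work as stated. Citing \cite{VAV14} for ``at most two reducible fibres'' is not available: VAV14 produces, for a given Brauer class, \emph{some} genus one fibration with that property, and to transfer their bound to the pencil $\langle F,F'\rangle$ you would first have to prove that this pencil \emph{is} the VAV14 pencil for the class in question. The paper makes that identification only informally, in a remark \emph{after} the lemma (where it also notes that the VAV14 fibres are in general unions of two irreducible conics, i.e.\ of type $I_2$, so the matching is not immediate), and its actual proof uses the direct route you name as the alternative: on the relatively minimal rational elliptic surface every component of a reducible fibre is a $(-2)$-curve, every $(-2)$-curve is vertical by adjunction, and the only $(-2)$-curves on the blow-up are the strict transforms of the eight lines in $F$ and $F'$ --- since the only negative curves on $X_\bfa$ are its sixteen lines, exactly those eight pass through (exactly one) base point, and no other class ($C^2=0$, $C\cdot(-K)=2$, etc.) drops to self-intersection $-2$ after the blow-up. (The paper states this $(-2)$-curve claim in one line, so your instinct that this is the delicate point is sound; but you should replace the VAV14 appeal by this incidence check.) A smaller caveat: your assertion that every remaining non-trivial class is represented by a double four disjoint from the fibre components is not automatic --- in the order-four case of \S3.5 the paper exhibits double fours containing $L_1^{+}$, which \emph{is} a fibre component --- so one must argue that each remaining class also admits a representative among $L_3^{\pm}, L_4^{\pm}, M_1^{\pm}, M_2^{\pm}$; the paper's own proof makes the same implicit assumption, so here you match its level of rigour rather than fall below it.
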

\begin{proof}
The linear system spanned by $F$ and $F'$ is a subsystem of $|-K_{X_\bfa}|$.  Hence it gives a genus one pencil on $X_\bfa$. Its base points are precisely the four singular points of the following fibres of the conic bundle fibrations:  $L_1^{+}\cup L_1^{-}, L_2^{+}\cup L_2^{-}, M_3^{+}\cup M_3^{-}, M_4^{+} \cup M_4^{-}$. The blow up of these four base points produces a geometrically rational elliptic surface\footnote{not necessarily with a section over the ground field} with two reducible fibres given by the strict transforms of $F$ and $F'$. Since each of the latter is given by four lines in a square configuration and the singular points of this configuration are not blown up, these are of type $I_4$. There are no other reducible fibres as the only $(-2)$-curves are the ones contained in the strict transforms of $F$ and $F'$. Let $\mathcal{E}$ denote the fibration map.
 
 The Azumaya algebra $(f,b)$ with $f$ and $b$ as in Theorem \ref{doublefour-Brauer} taking as double four the components of $F$ and $F'$, gives a Brauer element which is vertical for the genus one fibration $\mathcal{E}$. Indeed, the lines that give such a double four are clearly in $\Pic_{vert}$ and hence the algebra $(f,b)$ lies in the image of $H^1(k, \Pic_{vert}) \rightarrow H^1(k, \bar{X_{\bfa}})$. By \cite[Prop.~4]{Bri06} it gives an element of the form $\mathcal{E}^*\mathcal{A}$, where $\mathcal{A}$ is in $\Br k(\PP^1)$.
 
 The other Brauer elements on $X_\bfa$ are described by double fours, i.e., pairs of sets of four $(-1)$-curves on $X_\bfa$, subject to intersection conditions. Since each component intersects each reducible fibre in exactly one point, after passing to its field of definition, these give sections of the genus one fibration. That is, such Brauer elements are horizontal with respect to this genus one fibration.
 
\end{proof}

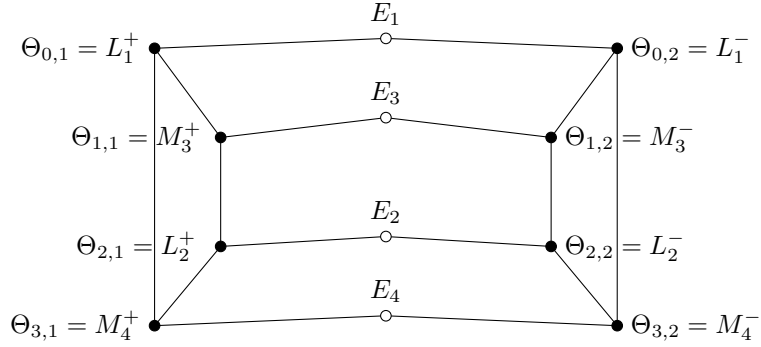
\begin{figure}\label{figure: badfibresgenus1}

 \[
  \begin{tikzpicture}[inner sep=0,x=25pt,y=15pt,font=\footnotesize]


      \draw (-3.5,3.25) -- (0,3.5);
            \draw (-2.5,1) -- (0,1.5);
            \draw (-2.5,-1.75) -- (0,-1.5);
     \draw (-3.5,-3.75) -- (0,-3.5);

     \draw (3.5,3.25) -- (0,3.5);
            \draw (2.5,1) -- (0,1.5);
            \draw (2.5,-1.75) -- (0,-1.5);
     \draw (3.5,-3.75) -- (0,-3.5);

     \draw (-3.5,3.25) -- (-2.5,1);
            \draw (-2.5,1) -- (-2.5,-1.75);
            \draw (-2.5,-1.75) -- (-3.5,-3.75);
     \draw (-3.5,-3.75) -- (-3.5,3.25);
     
     \draw (3.5,3.25) -- (2.5,1);
            \draw (2.5,1) -- (2.5,-1.75);
            \draw (2.5,-1.75) -- (3.5,-3.75);
     \draw (3.5,-3.75) -- (3.5,3.25);




   \filldraw (-3.5,3.25) node[left=5pt]{$\Theta_{0,1}=L_1^{+}$} circle (2pt);

   \filldraw (-2.5,1) node[left=7pt]{$\Theta_{1,1}=M_3^{+}$} circle (2pt);

   \filldraw(-2.5,-1.75) node[left=9pt] {$\Theta_{2,1}=L_2^{+}$} circle (2pt);

      \filldraw (-3.5,-3.75) node[left=5pt]{$\Theta_{3,1}=M_4^{+}$} circle (2pt);

      \filldraw[fill=white] (0, -3.5) node[above=5pt]{$E_4$} circle (2pt);

   \filldraw[fill=white] (0,1.5) node[above=5pt]{$E_3$} circle (2pt);

   \filldraw[fill=white] (0,-1.5) node[above=5pt]{$E_2$} circle (2pt);

      \filldraw[fill=white] (0,3.5) node[above=5pt]{$E_1$} circle (2pt);
 \filldraw (3.5,3.25) node[right=5pt]{$\Theta_{0,2}=L_1^{-}$} circle (2pt);

   \filldraw (2.5,1) node[right=5pt]{$\Theta_{1,2}=M_3^{-}$}circle (2pt);

   \filldraw(2.5,-1.75) node[right=5pt]{$\Theta_{2,2}=L_2^{-}$} circle (2pt);

      \filldraw (3.5,-3.75) node[right=5pt]{$\Theta_{3,2}=M_4^{-}$} circle (2pt);

  \end{tikzpicture}
 \] 
 \caption{The reducible fibres of the genus one fibration $\mathcal{E}$. The eight $\bullet$ denote fibre components and the four $\circ$ denote sections given by the blow up of the 4 base points.}
\end{figure}

\begin{remark}
The genus one fibration for which a Brauer element is vertical described in \cite{VAV14} has in general two reducible fibres given as the union of two geometrically irreducible conics, i.e., they are of type $I_2$. In our setting all the conics are reducible and hence give rise to fibres of type $I_4$. More precisely, let $C_1\cup C_2$ and $C'_1\cup C'_2$ be the two reducible fibres with $C_i$ and $C'_i$ conics, then $C_1\cup C'_1$ is linearly equivalent to one of the fours, say $L_1^{+}\cup L_2^{+}\cup L_1^{-}\cup L_2^{-} $ and $C_1\cup C'_2$ is linearly equivalent to $M_3^{+}\cup M_4^{+} \cup M_3^{-}+M_4^{-}$.
This seems to be very particular of the family considered in this note. More precisely, the presence of two conic bundle structures does not seem to be enough to guarantee that the reducible fibres of the genus one fibration are of type $I_4$. For that one needs that the largest Galois orbit of lines has size at most four and moreover that the field of definition of two of such orbits is the same. 
\end{remark}

\subsection{Mordell--Weil meets Brauer}
In what follows we will keep the letter $\mathcal{E}$ for the genus one fibration on the blow up surface just described. We now give a proof of our main result, Theorem~\ref{thm:MWBrauer}.


\begin{proof}
To prove (i) notice that the hypothesis of Proposition~\ref{prop:BrXconic} imply that the four blown up points form two distinct orbits of Galois conjugate points. To exemplify, we work with the genus one fibration given by $F$ and $F'$ as in Lemma~\ref{lemma: genusonefibration}. Let $P_i$ be the intersection point of $L_i^{+}$ and $L_i^{-}$ for $i=1,2$ and that of $M_i^{+}$ and $M_i^{-}$, for $i=3,4$. Denote by $E_i$ the exceptional curve after the blow up of $P_i$. Then $\{E_1,E_2\}$ and $\{E_3,E_4\}$ give two pairs of conjugate sections of $\mathcal{E}$. Moreover, the sections on a pair intersect opposite, i.e., disjoint, components of the fibres given by $F$ and $F'$. Fixing one as the zero section of $\mathcal{E}$, say $E_1$, then a height computation gives that $E_2$ is the 2-torsion section of $\mathcal{E}$. Indeed, as we have fixed $E_1$ as the zero section, the strict transform of $L_1^{+}$ and $L_1^{-}$ are the zero components of the fibres $F$ and $F'$, respectively. We denote them by $\Theta_{0,j}$ with $j=1,2$ respectively. Keeping the standard numbering of the fibre components, the strict transforms of $L_2$ and $L'_2$ are denoted by $\Theta_{2,j}$, with $j=1,2$, respectively. Finally, in this notation, $M_3^{+}$ and $M_3^{-}$ correspond to $\Theta_{1,j}$ while $M_4^{+}$ and $M_4^{-}$ correspond to $\Theta_{3,j}$, for $j=1,2$ respectively.

To compute the height of the section $E_2$ we  need the contribution of each $I_4$ to the pairing which in this case is $1$ (see \cite[\S11]{ShiodaSchuett} for details on the height pairing on elliptic surfaces and the contribution of each singular fibre to it). We have thus
$$\langle E_2, E_2 \rangle= 2-0-1-1=0.$$
In particular, $E_2$ is a torsion section. Since $E_2$ is distinct from the zero section $E_1$ and such fibrations admit torsion of order at most $2$ (see \cite{Persson} for the list of fibre configurations and torsion on rational elliptic surfaces), we conclude that $E_2$ is a 2-torsion section.
The two other conjugate exceptional divisors $E_3$ and $E_4$ give sections of infinite order as one can see, for example, after another height pairing computation.

 To show (ii) it is enough to notice that the hypothesis of Proposition~\ref{prop:BrXconic} imply that the four base points of the linear system spanned by $F$ and $F'$ are defined over $k$. From the discussion above we have that the zero section, the 2-torsion and also a section of infinite order, say $E_3$, are defined over $k$ since each of them is an exceptional curve above a $k$-point. The height matrix of the sections $E_3$ and $E_4$ has determinant zero, hence the section $E_4$ is linearly dependent on $E_3$. Moreover, it follows from the Shioda--Tate formula for $\Pic(X)^{\Gal(\bar{k}/ k)}$ that any section defined over $k$ of infinite order is linearly dependent on $E_3$. Indeed, the rank of the Picard group of the rational elliptic surface is 6 since that of $X_{\bfa}$ has rank 2 and we blow up 4 rational points. The non-trivial components of the two fibres of type $I_4$ give a contribution of 3 to the rank. The other 3 come from the zero section, a smooth fibre and a section of infinite order, say $E_3$. For a second section of infinite order which is independent in the Mordell--Weil group of $E_3$ one can consider the pull-back of a line in $X_\bfa$. The hypothesis on the Brauer group implies that $X_\bfa$ has no line defined over $k$ but each is defined over a quadratic extension. 
\end{proof}


\bibliographystyle{amsalpha}{}
\bibliography{bibliography/references}

\providecommand{\bysame}{\leavevmode\hbox to3em{\hrulefill}\thinspace}
\providecommand{\MR}{\relax\ifhmode\unskip\space\fi MR }
\providecommand{\MRhref}[2]{%
  \href{http://www.ams.org/mathscinet-getitem?mr=#1}{#2}
}
\providecommand{\href}[2]{#2}
\begin{thebibliography}{CTKS87}

\bibitem[BBFL07]{BBFL07}
M.~J. Bright, N.~Bruin, E.~V. Flynn, and A.~Logan, \emph{The {B}rauer-{M}anin
  obstruction and {$\Sha$}\emph{[2]}}, LMS J. Comput. Math. \textbf{10} (2007),
  354--377.

\bibitem[Bri06]{Bri06}
M.~Bright, \emph{Brauer groups of diagonal quartic surfaces}, J. Symbolic
  Comput. \textbf{41} (2006), no.~5, 544--558.

\bibitem[CT90]{CT90}
J.-L. Colliot-Th\'{e}l\`ene, \emph{Surfaces rationnelles fibr\'{e}es en
  coniques de degr\'{e} {$4$}}, S\'{e}minaire de {T}h\'{e}orie des {N}ombres,
  {P}aris 1988--1989, Progr. Math., vol.~91, Birkh\"{a}user Boston, Boston, MA,
  1990, pp.~43--55.

\bibitem[CTKS87]{CTKS}
J.-L. Colliot-Th\'{e}l\`ene, D.~Kanevsky, and J.-J. Sansuc,
  \emph{Arithm\'{e}tique des surfaces cubiques diagonales}, Diophantine
  approximation and transcendence theory ({B}onn, 1985), Lecture Notes in
  Math., vol. 1290, Springer, Berlin, 1987, pp.~1--108.

\bibitem[CTS80]{CTS80}
J.-L. Colliot-Th\'{e}l\`ene and J.-J. Sansuc, \emph{La descente sur les
  vari\'{e}t\'{e}s rationnelles}, Journ\'{e}es de {G}\'{e}om\'{e}trie
  {A}lg\'{e}brique d'{A}ngers, {J}uillet 1979/{A}lgebraic {G}eometry, {A}ngers,
  1979, Sijthoff \& Noordhoff, Alphen aan den Rijn---Germantown, Md., 1980,
  pp.~223--237.

\bibitem[FLS18]{FLS18}
C.~Frei, D.~Loughran, and E.~Sofos, \emph{Rational points of bounded height on
  general conic bundle surfaces}, Proc. Lond. Math. Soc. (3) \textbf{117}
  (2018), no.~2, 407--440.

\bibitem[Isk71]{Isk71}
V.~A. Iskovskih, \emph{Birational properties of a surface of degree {$4$} in
  {$\PP_{k}^{4}$}}, Mat. Sb. (N.S.) \textbf{88(130)} (1971), 31--37.

\bibitem[JS17]{JS17}
J.~Jahnel and D.~Schindler, \emph{Del {P}ezzo surfaces of degree four violating
  the {H}asse principle are {Z}ariski dense in the moduli scheme}, Ann. Inst.
  Fourier (Grenoble) \textbf{67} (2017), no.~4, 1783--1807.

\bibitem[KM17]{KM17}
J.~Koll\'{a}r and M.~Mella, \emph{Quadratic families of elliptic curves and
  unirationality of degree 1 conic bundles}, Amer. J. Math. \textbf{139}
  (2017), no.~4, 915--936.

\bibitem[Lan54]{Lang}
S.~Lang, \emph{Some applications of the local uniformization theorem}, Amer. J.
  Math. \textbf{76} (1954), 362--374.

\bibitem[Man74]{Man74}
Yu.~I. Manin, \emph{Cubic forms: algebra, geometry, arithmetic}, North-Holland
  Publishing Co., Amsterdam-London; American Elsevier Publishing Co., New York,
  1974, Translated from the Russian by M. Hazewinkel, North-Holland
  Mathematical Library, Vol. 4.

\bibitem[MS20]{MS20}
V.~{Mitankin} and C.~{Salgado}, \emph{{Rational points on del Pezzo surfaces of
  degree four}}, arXiv:2002.11539.

\bibitem[Nis55]{Nishimura}
H.~Nishimura, \emph{Some remarks on rational points}, Mem. Coll. Sci. Univ.
  Kyoto Ser. A. Math. \textbf{29} (1955), 189--192.

\bibitem[Per90]{Persson}
U.~Persson, \emph{Configurations of {K}odaira fibers on rational elliptic
  surfaces}, Math. Z. \textbf{205} (1990), no.~1, 1--47.

\bibitem[Poo17]{P17}
B.~Poonen, \emph{Rational points on varieties}, Graduate Studies in
  Mathematics, vol. 186, American Mathematical Society, Providence, RI, 2017.

\bibitem[Sal86]{Sal86}
P.~Salberger, \emph{Sur l'arithm\'{e}tique de certaines surfaces de {D}el
  {P}ezzo}, C. R. Acad. Sci. Paris S\'{e}r. I Math. \textbf{303} (1986), no.~7,
  273--276.

\bibitem[SD93]{SD93}
P.~Swinnerton-Dyer, \emph{The {B}rauer group of cubic surfaces}, Math. Proc.
  Camb. Phil. Soc. \textbf{113} (1993), 449--460.

\bibitem[SD99]{SD99}
\bysame, \emph{{B}rauer–{M}anin obstructions on some {D}el {P}ezzo surfaces},
  Math. Proc. Camb. Phil. Soc. \textbf{125} (1999), 193--198.

\bibitem[Shi90]{ShiodaMWL}
T.~Shioda, \emph{On the {M}ordell-{W}eil lattices}, Comment. Math. Univ. St.
  Paul. \textbf{39} (1990), no.~2, 211--240.

\bibitem[Sko96]{Sko96}
A.~N. Skorobogatov, \emph{Descent on fibrations over the projective line},
  Amer. J. Math. \textbf{118} (1996), no.~5, 905--923. \MR{1408492}

\bibitem[SS10]{ShiodaSchuett}
M.~Sch\"{u}tt and T.~Shioda, \emph{Elliptic surfaces}, Algebraic geometry in
  {E}ast {A}sia---{S}eoul 2008, Adv. Stud. Pure Math., vol.~60, Math. Soc.
  Japan, Tokyo, 2010, pp.~51--160.

\bibitem[VAV14]{VAV14}
A.~V\'arilly-Alvarado and B.~Viray, \emph{Arithmetic of del {P}ezzo surfaces of
  degree 4 and vertical {B}rauer groups}, Adv. Math. \textbf{255} (2014),
  153--181.

\bibitem[Wit07]{Wit07}
O.~Wittenberg, \emph{Intersections de deux quadriques et pinceaux de courbes de
  genre 1/{I}ntersections of two quadrics and pencils of curves of genus 1},
  Lecture Notes in Mathematics, vol. 1901, Springer, Berlin, 2007.

\end{thebibliography}
\end{document}